\documentclass[11pt]{article}
\pdfoutput=1
\usepackage[UKenglish]{isodate}
\usepackage[T1]{fontenc}
\usepackage[noBBpl]{mathpazo}

\usepackage{amsmath, amssymb, amsfonts, amsthm, dsfont, zlmtt, array, url}

\usepackage[a4paper,left=2.8cm,right=2.8cm,top=2.8cm,bottom=2.8cm]{geometry}
\setlength{\parskip}{0.3\baselineskip}
\setlength{\parindent}{14pt}
\usepackage{microtype}
\usepackage[shortcuts]{extdash}
\frenchspacing 

\cleanlookdateon

\usepackage{titlesec}
\titlespacing*{\section}{0pt}{\baselineskip}{0pt}
\titlespacing*{\subsection}{0pt}{0.66\baselineskip}{0pt}

\usepackage[shortlabels]{enumitem}
\setlist{leftmargin=0.8cm,topsep=0pt,itemsep=-2pt}
\setlist[enumerate]{label=\rm{(\roman*)}}

\numberwithin{equation}{section}

\makeatletter
\g@addto@macro\normalsize{%
  \setlength\abovedisplayskip{0.4\baselineskip plus 0.4\baselineskip}
  \setlength\belowdisplayskip{0.4\baselineskip plus 0.4\baselineskip}
  \setlength\abovedisplayshortskip{-0.3\baselineskip}
  \setlength\belowdisplayshortskip{0.4\baselineskip plus 0.4\baselineskip}
}
\makeatother

\makeatletter\def\blfootnote{\gdef\@thefnmark{}\@footnotetext} \makeatother
\newcommand{\dateline}[1]{\enlargethispage{16pt}\blfootnote{\phantom{\Large M}\hspace{-1em}\hspace{-20pt}\emph{Date} #1}}

\renewenvironment{thebibliography}[1]
{ \begin{oldthebibliography}{#1}
  \setlength{\parskip}{0pt}
  \setlength{\itemsep}{1pt plus 0.3ex}
  \bgroup\footnotesize }
{ \egroup \end{oldthebibliography} }

\makeatletter
\renewenvironment{proof}[1][\proofname]{\par
  \pushQED{\qed}%
  \normalfont
  \topsep2pt \partopsep1pt 
  \trivlist
  \item[\hskip\labelsep
        \itshape
    #1\@addpunct{.}]\ignorespaces
}{%
  \popQED\endtrivlist\@endpefalse
  \addvspace{6pt plus 6pt}
}
\makeatother

\newtheoremstyle{shdefinition}{8pt}{4pt}{}{}{\bfseries\boldmath}{.}{0.3em}{} 
\newtheoremstyle{shplain}{8pt}{4pt}{\itshape}{}{\bfseries\boldmath}{.}{0.3em}{} 
\theoremstyle{shdefinition}
\newtheorem{definition}{Definition}[section]
\newtheorem{shdefinition}{Definition}
\newtheorem{example}[definition]{Example}
\newtheorem*{acknowledgements*}{Acknowledgements}
\theoremstyle{shplain}
\newtheorem{theorem}[definition]{Theorem}
\newtheorem{shtheorem}[shdefinition]{Theorem}

\newtheorem{shcorollary}[shdefinition]{Corollary}
\newtheorem{proposition}[definition]{Proposition}
\newtheorem{lemma}[definition]{Lemma}

\newcommand{\e}{\varepsilon}
\newcommand{\p}{\varphi}
\renewcommand{\l}{\lambda}
\newcommand{\C}{\mathcal{C}}
\newcommand{\M}{\mathcal{M}}
\renewcommand{\S}{\mathcal{S}}
\newcommand{\<}{\langle}
\renewcommand{\>}{\rangle}
\renewcommand{\leq}{\leqslant}
\renewcommand{\geq}{\geqslant}
\newcommand{\soc}{\mathrm{soc}}
\newcommand{\Aut}{\mathrm{Aut}}
\newcommand{\Out}{\mathrm{Out}}
\newcommand{\Inndiag}{\mathrm{Inndiag}}
\newcommand{\F}{\mathbb{F}}

\renewcommand{\div}{\mid}
\newcommand{\PSL}{\mathrm{PSL}}
\newcommand{\PSU}{\mathrm{PSU}}

\begin{document}

\begin{center} 
{\LARGE \textbf{The maximal size of a minimal generating set}} \\[11pt]
{\Large Scott Harper}                                          \\[22pt]
\end{center}

\begin{center}
\begin{minipage}{0.8\textwidth}
\small A generating set for a finite group $G$ is \emph{minimal} if no proper subset generates~$G$, and $m(G)$ denotes the maximal size of a minimal generating set for $G$. We prove a conjecture of Lucchini, Moscatiello and Spiga by showing that there exist $a,b > 0$ such that any finite group $G$ satisfies $m(G) \leq a \cdot \delta(G)^b$, for $\delta(G) = \sum_{\text{$p$ prime}} m(G_p)$ where $G_p$ is a Sylow $p$-subgroup of $G$. To do this, we first bound $m(G)$ for all almost simple groups of Lie type (until now, no nontrivial bounds were known except for groups of rank $1$ or $2$). In particular, we prove that there exist $a,b > 0$ such that any finite simple group $G$ of Lie type of rank $r$ over the field $\mathbb{F}_{p^f}$ satisfies $r + \omega(f) \leq m(G) \leq a(r + \omega(f))^b$, where $\omega(f)$ denotes the number of distinct prime divisors of $f$. In the process, we confirm a conjecture of Gill and Liebeck that there exist $a,b > 0$ such that a minimal base for a faithful primitive action of an almost simple group of Lie type of rank $r$ over $\mathbb{F}_{p^f}$ has size at most $ar^b + \omega(f)$.
\par
\end{minipage}
\end{center}

\dateline{28 June 2023 \ \emph{2020 Mathematics Subject Classification} Primary: 20F05, Secondary: 20B05, 20E28, 20E32.}
 
\section{Introduction} \label{s:intro}

Since a generating set for a group remains a generating set if additional elements are added, it is natural to focus on generating sets for which no proper subset is a generating set; we call these \emph{minimal generating sets} (they are also known as \emph{independent generating sets}). A minimal generating set need not have minimum possible size. For instance, if $n \geq 4$, then $\{ (1 \ 2), (2 \ 3), \dots, (n-1 \ n) \}$ is a minimal generating set for $S_n$, but it has size $n-1$, which exceeds the minimum size possible size of $2$. How large can a minimal generating set be? 

Let $G$ be a finite group. We begin by comparing what is known about the minimum size $d(G)$ and maximum size $m(G)$ of a minimal generating set for $G$. If $G$ is a $p$-group, then $d(G) = m(G)$ (this follows from Burnside's basis theorem), so if $G$ is nilpotent, then $d(G) = \max_{\text{$p$ prime}} d(G_p)$ and $m(G) = \sum_{\text{$p$ prime}} d(G_p)$ where $G_p$ is a Sylow \mbox{$p$-subgroup} of $G$. In 1989, Guralnick \cite{ref:Guralnick89} and Lucchini \cite{ref:Lucchini89} independently proved that all finite groups $G$ satisfy $d(G) \leq \max_{\text{$p$ prime}} d(G_p) + 1$. Do all finite groups satisfy $m(G) \leq  \sum_{\text{$p$ prime}} d(G_p)+ 1$? Writing $\delta(G) = \sum_{\text{$p$ prime}} d(G_p)$, Lucchini, Moscatiello and Spiga \cite{ref:LucchiniMoscatielloSpiga21} showed that $m(G) \leq \delta(G)+1$ is false in general, but they conjectured that there exist $a,b > 0$ such that every finite group $G$ satisfies $m(G) \leq a \cdot \delta(G)^b$. Our first theorem confirms this local-to-global conjecture.

\begin{shtheorem} \label{thm:main}
There exist $a,b > 0$ such that if $G$ is any finite group, then $m(G) \leq a \cdot \delta(G)^b$. Moreover, this is true for $a = 10^{10}$ and $b=10$.
\end{shtheorem}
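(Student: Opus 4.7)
The plan is to combine two ingredients: a reduction from general finite groups to (close to) almost simple groups, and the new bound $m(G) \leq a(r+\omega(f))^b$ for almost simple groups $G$ of Lie type of rank $r$ over $\F_{p^f}$ stated in the abstract.

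For the reduction, I would induct on $|G|$. Let $N$ be a minimal normal subgroup of $G$. The aim is an inequality of the form $m(G) \leq m(G/N) + h(G,N)$ for a ``local'' contribution $h(G,N)$ polynomially controlled by $\delta(G)$. When $N$ is elementary abelian, $h(G,N)$ is bounded by the number and dimensions of the composition factors of $N$ as a $G$-module, quantities which descend to $\delta(G)$ via the Sylow structure. When $N \cong T^k$ is a product of isomorphic nonabelian simple groups, $h(G,N)$ is essentially the size of a minimal base for the action of $G/C_G(N)$ on $N$, and here the Gill--Liebeck conjecture (confirmed in the paper for Lie type) gives a bound of the form $ak^b(r+\omega(f))^c$. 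The accumulation of contributions along a chief series must then be controlled: since $\delta$ is additive over direct products of coprime-order sections and each nontrivial chief factor contributes at least one prime power to $\delta(G)$, the hope is to sum contributions to a polynomial, rather than exponential, bound in $\delta(G)$.

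The almost simple case is then the core. The alternating and sporadic cases are handled by the known logarithmic-in-$|G|$ bounds on $m$, which are absorbed into $\delta(G)$. For an almost simple group $G$ with socle $S$ of Lie type of rank $r$ over $\F_{p^f}$, the main bound gives $m(G) \leq a(r+\omega(f))^b$, so it suffices to verify $r + \omega(f) \leq c \cdot \delta(G)$. The rank bound is clear: a defining-characteristic Sylow subgroup of $S$ contains a unipotent abelian section of $\F_p$-dimension at least $r$, giving $r \leq d(G_p) \leq \delta(G)$. For $\omega(f)$, each prime $\ell \mid f$ contributes to $\delta(G)$, either directly via field automorphisms in $G/S$ or, in their absence, via a Zsigmondy prime divisor of $q^i - 1$ for an appropriate $i$, appearing in the Sylow structure of a maximal torus of $S$.

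The principal obstacle will be the chief-series reduction: naively summing $h(G_i/G_{i+1})$ across a chief series yields a blow-up that grows with the number of factors, and avoiding this requires a global argument -- likely using crowns and crown-based powers in the spirit of Lucchini and collaborators -- that groups chief factors by isomorphism type and bounds their joint contribution in terms of a single instance of $\delta$. A secondary but still substantial difficulty is the $\omega(f)$-bound for almost simple groups where $G/S$ omits most field automorphisms: here the Zsigmondy approach must be executed uniformly across simple families, with small-rank exceptional cases and small-$q$ exceptions to Zsigmondy's theorem handled by hand.
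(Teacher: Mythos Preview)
Your overall architecture is right --- reduce to a statement about almost simple groups, then feed in Theorem~2 --- but two of the load-bearing steps are misplaced relative to what actually works.

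First, the reduction you describe as the ``principal obstacle'' is precisely the content of the Lucchini--Moscatiello--Spiga result already in the literature: their theorem says that if every almost simple group $S$ with socle $S_0$ a composition factor of $G$ satisfies $m(S) - m(S/S_0) \leq a\,\omega(|S_0|)^b$, then $m(G) \leq a\,\delta(G)^b$. The paper simply cites this. Your proposed chief-series induction with a term $h(G,N)$ would have to rediscover the crown machinery you allude to, and your description of $h(G,N)$ for nonabelian $N$ as ``the size of a minimal base for $G/C_G(N)$ on $N$'' is not how that argument goes; there is no direct base-size interpretation of $m(G)-m(G/N)$ at this stage. The height and base-size input (Gill--Liebeck) is used inside the proof of Theorem~2, not in the reduction.

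Second, and more substantively, the linking quantity is $\omega(|S_0|)$, not $\delta(S)$. To invoke the reduction theorem you must show $m(S) \leq a\,\omega(|S_0|)^b$ for almost simple $S$, so after applying Theorem~2 you need $r + \omega(f) \leq c\,\omega(|S_0|)$. Your argument for the rank contribution, $r \leq d(S_p)$ via the Frattini quotient of a Sylow $p$-subgroup, bounds $r$ by $\delta(S)$ but not by $\omega(|S_0|)$: indeed $d(S_p)$ is typically of order $rf$, far larger than $\omega(|S_0|)$. The paper instead obtains both the $r$ and the $\omega(f)$ contributions from Zsigmondy: the order $|S_0|$ is divisible by roughly $r/2$ distinct factors $p^{fd_i}-1$ and by a chain $p^{e_1}-1,\ p^{e_1e_2}-1,\ \ldots$ running over the prime divisors of $f$, and applying Zsigmondy to each yields $\omega(|S_0|) \geq \tfrac{1}{2}(r-1) + \omega(f)$, hence $r+\omega(f) \leq 3\,\omega(|S_0|)$. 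Your Zsigmondy idea for $\omega(f)$ is exactly right; you need to use it for $r$ as well and drop the Sylow-$p$ argument.
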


In \cite{ref:LucchiniMoscatielloSpiga21}, Theorem~\ref{thm:main} is reduced to a statement about almost simple groups. (A group $G$ is \emph{almost simple} if $G_0 \leq G \leq \Aut(G_0)$ for a nonabelian simple group $G_0$.) Even for finite simple groups $G$, while it has long been known that $d(G) \leq 2$ (see \cite{ref:AschbacherGuralnick84}), little is known about $m(G)$. In 2000, Whiston \cite{ref:Whiston00} (using the Classification of Finite Simple Groups) proved $m(A_n) = n-2$. Later, in 2002, Whiston and Saxl \cite{ref:SaxlWhiston02} proved that $m(\PSL_2(p^f)) \leq \max\{ 6, \omega(f) + 2 \}$ with equality if $\omega(f) \geq 4$, and the exact value of $m(\PSL_2(p))$ is given in \cite{ref:Jambor13}. (Throughout, $\omega(n)$ is the number of distinct prime divisors of $n$, and $\Omega(n)$ is the number of prime divisors of $n$ counted with multiplicity.) Except for the 3-dimensional classical groups studied in \cite{ref:Keen11}, no nontrivial bounds exist for any other finite simple group. This motivates our second theorem, which we will use to prove Theorem~\ref{thm:main}.\nopagebreak

\begin{shtheorem} \label{thm:almost_simple}
There exist $\alpha,\beta > 0$ such that if $G$ is an almost simple group of Lie type of rank $r$ over $\F_{p^f}$ (where $p$ is prime), then $m(G) \leq \alpha (r+\omega(f))^\beta$. Moreover, this is true for $\alpha=10^5$ and $\beta=10$.
\end{shtheorem}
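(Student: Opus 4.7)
The plan has two main pillars. The first is to prove the base-size bound of Gill and Liebeck mentioned in the abstract: every faithful primitive action of an almost simple $G$ of Lie type of rank $r$ over $\F_{p^f}$ admits a base of size at most $ar^b + \omega(f)$ for absolute constants $a,b>0$. The second is to translate this bound into a bound on $m(G)$. The starting observation is that any independent generating sequence $x_1,\dots,x_m$ produces a strict chain $1 = H_0 < H_1 < \dots < H_m = G$ with $H_i = \langle x_1,\dots,x_i\rangle$, so the crudest bound $m(G) \leq \ell(G)$ (length of the longest subgroup chain) already gives $O(r^2 f)$. The goal is to replace the additive $f$ here with $\omega(f)$.

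For the base-size bound, I would perform a class-by-class analysis using Aschbacher's theorem on maximal subgroups of classical groups, together with the Liebeck--Seitz classification for exceptional groups. For most Aschbacher classes ($\C_1$ parabolics, $\C_2$ imprimitive, $\C_4$ and $\C_7$ tensor type, $\C_6$ extraspecial normalizer, $\C_8$ classical type, and the almost simple $\S$-class), existing techniques of Burness, Guralnick, Liebeck, Saxl, Shalev and collaborators give base-size bounds polynomial in $r$ alone, with no dependence on $f$. The new content lies in the field-related classes: the field-extension class $\C_3$ and the subfield class $\C_5$, where the base size must be shown to be $O(\omega(f))$ rather than $O(f)$.

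To convert the base-size bound into the bound on $m(G)$ asked for by Theorem~\ref{thm:almost_simple}, I would proceed in stages. First, the general inequality $m(G) \leq m(N) + m(G/N)$ for any $N \trianglelefteq G$ (which follows by choosing an independent generating sequence of $G$, extracting a subset whose images independently generate $G/N$, and checking that the complementary residues are independent in $N$) reduces the problem, via $N = G_0$, to bounding $m(G_0)$: indeed $G/G_0 \leq \Out(G_0)$ has its field-automorphism part cyclic of order dividing $f$, contributing at most $O(\omega(f))$ to $m$. For the simple socle $G_0$, given an independent generating sequence $x_1,\dots,x_m$ and a maximal $M \supseteq H_{m-1}$, the base-size bound produces conjugates $M^{g_1},\dots,M^{g_c}$ with trivial intersection, where $c \leq ar^b + \omega(f)$. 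Grouping the $x_j$ by which base points they move, combined with an induction in which the passage to $M$ strictly decreases $r$ (in the geometric classes) or a prime divisor of $f$ (in $\C_3,\C_5$), closes the argument in $O(r + \omega(f))$ stages and yields a polynomial bound of the required form.

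The main obstacle I anticipate is controlling the base size in the subfield class $\C_5$. A tower $\F_{p^f} \supset \F_{p^{f/q_1}} \supset \F_{p^{f/q_1 q_2}} \supset \cdots$ has length $\Omega(f)$, and a naive base-point construction would produce bases of size $\Omega(f)$. The challenge is to arrange base points so that one pair of conjugates causes the stabilizer to drop, in a single step, from a subfield subgroup over $\F_{p^{f/q}}$ to one over $\F_{p^{f/q^{v_q(f)}}}$, stripping all powers of the prime $q$ at once rather than one at a time. This requires delicate choices of semisimple elements tailored to each classical and exceptional type, and must be carried out uniformly including the twisted families $\PSU_n$, ${}^{2}B_2$, ${}^{2}G_2$, ${}^{2}F_4$ and ${}^{3}D_4$ where the underlying field carries an extra automorphism that interacts subtly with the subfield structure.
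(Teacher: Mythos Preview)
Your plan for the first pillar is plausible, though the paper takes a shorter route: rather than a class-by-class analysis, it lifts the existing Gill--Liebeck irredundant-base bound $I(G_0,\Omega)\leq 174r^8$ from the socle up to $G$ via a short lemma saying that passing through a normal subgroup with cyclic quotient costs at most $\omega$ of the quotient order. This handles the $\C_5$ difficulty you identify without any special constructions.

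The real gap is in your second pillar. Fixing one maximal $M\supseteq H_{m-1}$ and a base $(Mg_1,\ldots,Mg_c)$ gives no useful partition of the $x_j$: every nontrivial $x_j$ moves at least one base point, but nothing prevents almost all of them from moving the \emph{same} one, so ``grouping by which base points they move'' extracts no bound. Your induction is also broken: a typical maximal $M$ (a parabolic, say) is not almost simple of Lie type at all, so there is no smaller rank to induct on; and for subfield subgroups the induction you describe peels off one prime factor of $f$ at a time, giving $\Omega(f)$ rather than $\omega(f)$ stages.

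The mechanism the paper uses is genuinely different and is the idea you are missing. One associates to \emph{each} $x\in X$ a maximal $M_x\supseteq\langle X\setminus\{x\}\rangle$; these $M_x$ are pairwise distinct. One then \emph{counts} conjugacy classes of core-free maximal subgroups of the tractable types --- there are only $O\bigl((r+\omega(f))^2\bigr)$ of them --- and applies pigeonhole: if $|X|$ is too large, more than $Ar^B+\omega(f)$ of the $M_x$ lie in a single class. The corresponding cosets $\{Mg_y\}$ are then shown to form an \emph{independent set} for the action of $G$ on $G/M$, contradicting the height bound directly. The residual cases (some $M_x$ in the small ``non-geometric'' almost simple class, or $H_x$ itself almost simple of Lie type of strictly smaller rank) are finished by a crude length bound and a genuine rank induction, respectively.

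One smaller point: your inequality $m(G)\leq m(N)+m(G/N)$ is not established by the sketch you give. Writing each complementary $z$ as $y_z n_z$ with $n_z\in N$, the set $\{n_z\}$ is indeed independent in $N$, but it is a minimal generating set only for the subgroup it generates, not for $N$ itself, and $m$ is not monotone under passage to subgroups. The paper sidesteps this entirely by working with the almost simple $G$ throughout rather than reducing to the socle.
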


Up to improving the values of $\alpha$ and $\beta$, Theorem~\ref{thm:almost_simple} is best possible since if $G$ is a finite simple group of Lie type of rank $r$ over $\F_{p^f}$, then $m(G) \geq r+\omega(f)$ (see Proposition~\ref{prop:lower}).

The invariant $m(G)$ also plays a role in the product replacement algorithm for producing random elements of $G$ (see \cite{ref:CellerLeedhamGreenMurrayNiemeyerOBrien95}). This algorithm involves a random walk on the product replacement graph $\Gamma_n(G)$, and Diaconis and Saloff-Coste \cite{ref:DiaconisSaloffCoste98} proved that for large enough $n$ this random walk reaches the uniform distribution in time $|G|^{O(m(G))}n^2\log{n}$.

To prove Theorem~\ref{thm:almost_simple}, we relate $m(G)$ to some well studied invariants in permutation group theory. Let $G$ be a finite group acting faithfully on a set $X$. A sequence $(x_1,\dots,x_k)$ of points in $X$ is a \emph{base} if the pointwise stabiliser $G_{(x_1, \dots, x_k)}$ is trivial. This subject has a long history, with many connections to abstract group theory, computational group theory and graph theory, see the survey \cite{ref:BaileyCameron11}. A base $(x_1, \dots, x_k)$ is \emph{irredundant} if we have a proper subgroup chain $G > G_{(x_1)} > G_{(x_1,x_2)} > \cdots > G_{(x_1,x_2,\dots,x_k)} = 1$, and it is \emph{minimal} if no proper subsequence of $(x_1, \dots, x_k)$ is a base. A minimal base is irredundant, but the converse need not hold. Let $I(G,X)$ and $B(G,X)$ be the maximum size of an irredundant and minimal base, respectively. 

Much is known about the minimum size of a base, for instance the resolutions of Pyber's conjecture \cite{ref:DuyanHalasiMaroti18} and Cameron's conjecture \cite{ref:BurnessLiebeckShalev09}, but less is known about $I(G,X)$ and $B(G,X)$. However, Gill and Liebeck \cite{ref:GillLiebeck23} recently proved that any almost simple group of Lie type of rank $r$ over $\F_{p^f}$ (where $p$ is prime) acting faithfully and primitively on $X$ satisfies the bound $B(G,X) \leq I(G,X) \leq 177r^8 + \Omega(f)$. As explained in \cite[Example~5.1]{ref:GillLiebeck23}, $I(G,X)$ must depend on $\Omega(f)$, but Gill and Liebeck conjecture that $B(G,X)$ should only depend on $\omega(f)$ (see \cite[Conjecture~5.2]{ref:GillLiebeck23} for a precise statement). Our final theorem proves this conjecture. 

We actually prove a stronger result (also conjectured in \cite{ref:GillLiebeck23}) that is more convenient for proving Theorem~\ref{thm:almost_simple}. For a finite group $G$ acting on $X$, a sequence $S$ in $X$ is \emph{independent} if $G_{(S')} > G_{(S)}$ for proper all subsequences $S'$ of $S$, and the \emph{height}, denoted $H(G,X)$, is the maximum size of an independent sequence. Note that $B(G,X) \leq H(G,X) \leq I(G,X)$.

\begin{shtheorem} \label{thm:height}
There exist $A,B > 0$ such that if $G$ is an almost simple group of Lie type of rank $r$ over $\F_{p^f}$ (where $p$ is prime) acting faithfully and primitively on a set $X$, then $H(G,X) \leq Ar^B+\omega(f)$. Moreover, this is true for $A = 177$ and $B=8$.
\end{shtheorem}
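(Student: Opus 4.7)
My plan is to follow the framework of Gill and Liebeck \cite{ref:GillLiebeck23}, whose argument yields $H(G,X) \leq I(G,X) \leq 177r^8 + \Omega(f)$, and to surgically improve $\Omega(f)$ to $\omega(f)$ by exploiting the fact that independence is strictly stronger than irredundance. The $177r^8$ term originates from the geometric Aschbacher classes $\C_1$ through $\C_8$ (or the analogous structure theory for exceptional groups), and I expect it to transfer verbatim; the $\Omega(f)$ term originates from chains of subfield subgroups in the Aschbacher class $\C_5$, and this is precisely where the improvement must come.

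The key new ingredient I would prove is a lemma about cyclic groups: if $H_1, \dots, H_t$ are subgroups of a cyclic group of order $n$ with $\bigcap_{j \neq i} H_j \neq \bigcap_{j=1}^t H_j$ for every $i$, then $t \leq \omega(n)$. This follows from a one-line counting argument, namely that for each $i$ there must be a prime $p$ with $v_p(|H_i|)$ strictly smaller than $v_p(|H_j|)$ for all $j \neq i$, and distinct $i$ must use distinct primes. The lemma captures exactly the gap between $\Omega(n)$ (the length of the longest chain in the subgroup lattice of $C_n$) and $\omega(n)$ (the largest independent family of subgroups under intersection), and is the right replacement for the chain bound used by Gill and Liebeck.

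To apply this, given an independent sequence $(x_1, \dots, x_k)$ in $X$, I would analyze the projection of the stabilizer family $\{G_{x_i}\}$ onto the cyclic field-automorphism quotient $G/K$, where $K$ denotes the ``geometric part'' of $G$ (containing $G \cap \Inndiag(G_0)$ and the graph automorphisms). Roughly, I expect to partition the indices into a ``field-automorphism part'' on which the projected stabilizers form an independent family in $G/K$, bounded by $\omega(f)$ via the lemma, and a complementary ``geometric part'' effectively contained in $K$, bounded by $177r^8$ via Gill and Liebeck applied to a classical or exceptional group of rank $r$ over $\F_{p^f}$, where their bound is $f$-independent. The main obstacle is making this partition rigorous: the projection onto $G/K$ has a nontrivial kernel, and the independence of the full sequence must be shown to descend appropriately to both pieces. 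I anticipate this requires a careful case analysis mirroring Gill and Liebeck's, with novelty maximal subgroups and triality in exceptional groups being the most delicate cases, but no new geometric bound and no deterioration of the constants $A = 177$ and $B = 8$.
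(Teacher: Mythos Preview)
Your core idea is exactly the paper's: the cyclic-group lemma you describe is the content of the paper's Lemma~3.1, which says that if $N \trianglelefteq G$ with $G/N$ cyclic then $H(G,\Omega) \leq H(N,\Omega) + \omega(|G/N|)$, and its proof is precisely the valuation argument you sketch (for each prime $p_i$ dividing $|G/N|$, some single point $\alpha_i$ already achieves the correct $p_i$-part of the projected stabiliser, so the independent set is $\Delta \cup \{\alpha_1,\dots,\alpha_k\}$ with $\Delta$ independent for $N$).

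Where you diverge is in the execution of the ``geometric part'', and here you are making your life much harder than necessary. You propose to re-run Gill and Liebeck's case analysis inside $K$ and worry about novelties and triality; the paper avoids all of this. It takes $N$ to be the preimage in $\Aut(G_0)$ of the diagonal-and-graph part (so $\Aut(G_0)/N \cong C_f$), applies the lemma once to get $H(G,\Omega) \leq H(G\cap N,\Omega) + \omega(f)$, and then uses the crude chain bound $H(G\cap N,\Omega) \leq H(G_0,\Omega) + \ell(N/G_0) \leq H(G_0,\Omega) + \log_2(6r)$. Finally it quotes, as a black box, the bound $H(G_0,\Omega) \leq 174r^8$ already established inside Gill and Liebeck's proof for the socle. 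The arithmetic $174r^8 + 3r^3 \leq 177r^8$ closes the argument with no case analysis whatsoever; the partition you were worried about making rigorous is handled entirely at the abstract level of the lemma.
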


For a finite group $G$ acting on $X$, the height $H(G,X)$ is related to the \emph{relational complexity} $RC(G,X)$ via the inequality $RC(G,X) \leq H(G,X)+1$. Relational complexity arose in model theory and it has been the subject recent work, including Gill, Liebeck and Spiga's recent proof \cite{ref:GillLiebeckSpiga22} of Cherlin's conjecture that classifies the primitive actions satisfying $RC(G,X)=2$.

\begin{shcorollary} \label{cor:height}
If $G$ is an almost simple group of Lie type of rank $r$ over $\F_{p^f}$ (where $p$ is prime) acting faithfully and primitively on a set $X$, then the following hold
\begin{enumerate}
\item $B(G,X)  \leq 177r^8 + \omega(f)$
\item $RC(G,X) \leq 177r^8 + \omega(f) + 1$.
\end{enumerate}
\end{shcorollary}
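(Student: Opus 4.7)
The plan is that both parts of this corollary should follow immediately from Theorem~\ref{thm:height} combined with the two inequalities noted in the excerpt, so there is essentially no new content to establish beyond invoking what is already in hand. The paragraph introducing independent sequences records that $B(G,X) \leq H(G,X) \leq I(G,X)$, and the paragraph immediately following Theorem~\ref{thm:height} records that $RC(G,X) \leq H(G,X) + 1$; assuming Theorem~\ref{thm:height} I would simply apply these.

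Concretely, for part~(i) I would observe that since every minimal base is independent (any proper subsequence of a minimal base is not a base, so in particular its pointwise stabiliser is strictly larger), we have $B(G,X) \leq H(G,X)$. Combining with Theorem~\ref{thm:height} gives $B(G,X) \leq Ar^B + \omega(f)$, and substituting the explicit constants $A = 177$ and $B = 8$ yields the claimed bound $B(G,X) \leq 177r^8 + \omega(f)$.

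For part~(ii) I would use the standard inequality $RC(G,X) \leq H(G,X) + 1$ (which follows from the definition of relational complexity, since in any independent sequence the successive chain of stabilisers is strict, forcing the relational arity to exceed the sequence length by at most one). Applying Theorem~\ref{thm:height} again gives $RC(G,X) \leq 177r^8 + \omega(f) + 1$.

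The only genuine obstacle lies in Theorem~\ref{thm:height} itself, which this corollary depends on; once that is granted, neither part requires any further argument beyond citing the two elementary inequalities $B(G,X) \leq H(G,X)$ and $RC(G,X) \leq H(G,X)+1$, which hold for any faithful action of any finite group and are independent of the Lie-type hypothesis.
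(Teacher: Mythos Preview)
Your proposal is correct and matches the paper's treatment exactly: the paper does not even supply a separate proof of this corollary, since both parts are immediate from Theorem~\ref{thm:height} together with the inequalities $B(G,X) \leq H(G,X)$ and $RC(G,X) \leq H(G,X)+1$ already recorded in the introduction. There is nothing further to add.
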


\enlargethispage{11pt}

\begin{acknowledgements*}
The author thanks Andrea Lucchini for introducing him to this topic, Nick Gill and Martin Liebeck for information in advance of \cite{ref:GillLiebeck23}, Colva Roney-Dougal for several influential conversations, particularly concerning Section~\ref{ss:proof_height}, and the anonymous referee for useful comments. The author is a Leverhulme Early Career Fellow, and he thanks the Leverhulme Trust for their support.
\end{acknowledgements*}

\section{Preliminaries} \label{s:prelims}

\subsection{Maximal subgroups of almost simple groups} \label{ss:p_maximal}

The maximal subgroups of almost simple groups of Lie type are described by Theorem~\ref{thm:maximal}, which combines two theorems of Liebeck and Seitz \cite[Theorem~2]{ref:LiebeckSeitz90} and \cite[Theorem~2]{ref:LiebeckSeitz98}. 

Let $p$ be prime and let $X$ be a linear algebraic group over $\overline{\mathbb{F}}_p$, which from now on we call an \emph{algebraic group}. For a Steinberg endomorphism $\sigma$ of $X$, write $X_\sigma = \{ x \in X \mid x^\sigma = x \}$. A finite group $O^{p'}(X_\sigma)$ for a simple algebraic group $X$ of adjoint type and a Steinberg endomorphism $\sigma$ is usually simple, and in this case we call it a \emph{finite simple group of Lie type}. (Here $O^{p'}(G)$ is the subgroup generated by the $p$-elements of $G$.) In particular, for us, the Tits group ${}^2F_4(2)'$ is not a finite simple group of Lie type. Throughout, by \emph{rank} we mean untwisted rank.

\begin{theorem} \label{thm:maximal}
Let $G$ be an almost simple group of Lie type. Write $\soc(G) = O^{p'}(X_\sigma)$ for a simple algebraic group $X$ of adjoint type and a Steinberg endomorphism $\sigma$ of $X$. Let $M$ be a maximal subgroup of $G$ not containing $\soc(G)$. Then $M$ is one of the following
\begin{enumerate}[{\rm (I)}]
\item $N_G(Y_\sigma \cap \soc(G))$ for a maximal closed $\sigma$-stable positive-dimensional subgroup $Y$ of $X$
\item $N_G(X_\alpha \cap \soc(G))$ for a Steinberg endomorphism $\alpha$ of $X$ such that $\alpha^k=\sigma$ for a prime $k$
\item a local subgroup not in (I)
\item an almost simple group not in (I) or (II)
\item the Borovik subgroup: $M \cap \soc(G) = (A_5 \times A_6).2^2$ with $\soc(G) = E_8(q)$ and $p \geq 7$.
\end{enumerate}
\end{theorem}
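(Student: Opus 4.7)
The plan is essentially to import the two cited Liebeck--Seitz theorems and harmonise their outputs into the five\-/case list. One of the cited theorems handles the case where $\soc(G)$ is a classical group and the other handles the exceptional case; between them they cover every finite simple group of Lie type arising as $\soc(G)$, so fix $G$ with $\soc(G) = O^{p'}(X_\sigma)$ and a maximal subgroup $M$ of $G$ not containing $\soc(G)$, and treat the two cases in turn.

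In the classical case, the Aschbacher-style description in the relevant Liebeck--Seitz theorem partitions $M$ into geometric maximal subgroups (stabilisers of tensor product, imprimitive, reducible, classical-subgroup, field-extension, and subfield structures) together with an $\mathcal{S}$-class of almost simple subgroups not arising geometrically. I would go through the geometric classes and check that each one is realised as $N_G(Y_\sigma \cap \soc(G))$ for some maximal closed $\sigma$-stable positive-dimensional subgroup $Y \leq X$ (so falls into case (I)), with the sole exception of the subfield class, which is by definition given by a Steinberg endomorphism $\alpha$ with $\alpha^k = \sigma$ for $k$ prime (case (II)); the $\mathcal{S}$-subgroups then form case (IV). There are no purely local maximals in the classical setting beyond the geometric ones, so case (III) is vacuous and case (V) does not arise.

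In the exceptional case, I would quote the second Liebeck--Seitz theorem essentially verbatim. It already lists the possibilities for $M$ as: the normaliser of a maximal closed $\sigma$-stable positive-dimensional $Y \leq X$ (case (I)); a subfield-type subgroup arising from $\alpha^k = \sigma$ (case (II)); one of a small, explicitly enumerated collection of ``exotic'' local subgroups not arising from any such $Y$ (case (III)); an almost simple subgroup not coming from (I) or (II) (case (IV)); or the single Borovik subgroup $(A_5 \times A_6).2^2 \leq E_8(q)$ with $p \geq 7$ (case (V)). Thus the five cases of the theorem are exactly the union of the classification outputs in the two settings.

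The main obstacle is purely organisational: verifying that the bookkeeping is exhaustive and that the ``not in (I) or (II)'' qualifiers on cases (III), (IV), (V) produce a genuine partition. Concretely, one must check that every subfield maximal subgroup really is covered by some Steinberg $\alpha$ with $\alpha^k = \sigma$ and $k$ prime (iterating, if necessary, through composite factorisations of the exponent governing $\sigma$), and that the geometric/exotic boundary in the exceptional setting is drawn consistently with the algebraic-group language used here. Once these compatibility checks are discharged, no further argument is needed; the theorem is the combined restatement of the two cited results.
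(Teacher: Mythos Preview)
Your approach is essentially the paper's: the theorem is stated there without proof, merely as the conjunction of \cite[Theorem~2]{ref:LiebeckSeitz90} (exceptional case) and \cite[Theorem~2]{ref:LiebeckSeitz98} (classical case), so citing those two results and reconciling their output is exactly what is expected.

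There is, however, a bookkeeping slip in your classical analysis. You assert that every Aschbacher geometric class other than the subfield class $\C_5$ lands in case~(I), and hence that case~(III) is vacuous for classical groups. This is not so: the $\C_6$ subgroups (normalisers of symplectic-type or extraspecial $r$-groups) are local subgroups whose distinguished normal subgroup is finite, and they are \emph{not} in general of the form $N_G(Y_\sigma \cap \soc(G))$ for a positive-dimensional closed $\sigma$-stable $Y \leq X$. The paper itself notes (Section~\ref{ss:p_aschbacher}) that types (II) and (III) ``broadly overlap with $\C_5$ and $\C_6$ subgroups, respectively''. So in the classical case your case~(III) should absorb the $\C_6$ subgroups rather than being empty. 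Once this is corrected, your harmonisation goes through and matches the paper.
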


We say that a core-free maximal subgroup of an almost simple group of Lie type has \emph{type} (I), (II), (III), (IV) or (V) if it arises in case (I), (II), (III), (IV) or (V) of Theorem~\ref{thm:maximal}, respectively.

In the remainder of this section, we collect together information about the subgroups appearing in cases (I)--(V) that we will use in the proof of Theorem~\ref{thm:almost_simple}.

\subsection{Aschbacher's theorem and type~(I*) subgroups} \label{ss:p_aschbacher}

One usually categorises the maximal subgroups of an almost simple classical group $G$ via Aschbacher's subgroup structure theorem \cite{ref:Aschbacher84}. Following the notation of  Kleidman and Liebeck in \cite{ref:KleidmanLiebeck}, one can define a geometric class of subgroups $\C = \C_1 \cup \dots \cup \C_8$ (see \cite[Chapter~3]{ref:KleidmanLiebeck}) and a class $\S$ of almost simple groups (see \cite[Section~1.2]{ref:KleidmanLiebeck}) such that for every subgroup $H$ of $G$ not containing $\soc(G)$ either $H \leq M$ for a maximal subgroup $M \in \C$ or else $H \in \S$. The subgroups in $\C$ are all of type (I), (II) or (III) (with (II) and (III) broadly overlapping with $\C_5$ and $\C_6$ subgroups, respectively). However, Case~(I) also includes almost simple groups in $\S$ whose socle is a group of Lie type in defining characteristic. Therefore, it is convenient to make the following definitions.

Let $G$ be an almost simple group of Lie type. If $G$ is classical, then let (I*) be the set of all maximal subgroups $M$ of $G$ that are in (I) and are in the geometric class $\C$ or are twisted tensor product subgroups (see \cite{ref:Schaffer99}), and define (IV*) as (IV) without the groups of Lie type in defining characteristic. (It is natural to include the twisted tensor product subgroups in (I*) as they arise as geometric subgroups of the ambient algebraic group, see \cite[Theorem~2]{ref:LiebeckSeitz98} and the remark that follows it.) If $G$ is exceptional, then let (I*) be (I) and (IV*) be (IV). 

\begin{proposition} \label{prop:maximal}
Let $G$ be an almost simple classical group defined over $\F_q$ and let $H \leq G$ not contain $\soc(G)$. Then either $H \leq M$ for a maximal subgroup $M$ of $G$ of type (I*), (II), (III) or (IV*), or $H$ is an almost simple group of Lie type defined over a subfield of $\F_q$ satisfying $\mathrm{rank}(H) < \mathrm{rank}(G)$.
\end{proposition}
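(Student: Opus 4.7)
My plan is to extend $H$ to a maximal subgroup $M$ of $G$ (necessarily not containing $\soc(G)$) and identify the Liebeck--Seitz type of $M$ using Aschbacher's subgroup structure theorem \cite{ref:Aschbacher84}. Since $G$ is classical, $M$ lies either in the geometric class $\C = \C_1 \cup \cdots \cup \C_8$ or in the class $\S$ of almost simple groups with suitable irreducibility properties.

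Suppose first that $M \in \C$. As noted in the discussion preceding the proposition, $\C$-subgroups are of type (I), (II) or (III), and every geometric type~(I) subgroup is of type (I*) by definition, so $H \leq M$ yields the first alternative. Next suppose $M \in \S$ with $\soc(M)$ sporadic, alternating, or of Lie type in characteristic distinct from $p$. Then $M$ cannot be of type (I), (II) or (III) (whose members either come from positive-dimensional subgroups of $X$, from a Steinberg endomorphism of $X$, or are local), so by Theorem~\ref{thm:maximal}, $M$ is of type (IV), and in fact of type (IV*), since (IV*) excludes only almost simple groups of Lie type in defining characteristic.

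The one remaining case is $M \in \S$ with $\soc(M)$ of Lie type in defining characteristic $p$. If $M$ is a twisted tensor product subgroup in the sense of \cite{ref:Schaffer99}, then $M$ is of type (I*) by definition. Otherwise, invoking \cite[Theorem~2]{ref:LiebeckSeitz98}, $\soc(M)$ arises from a proper closed connected positive-dimensional simple subgroup $Y$ of $X$ together with a Steinberg endomorphism $\tau$ of $Y$; hence $M$ is almost simple of Lie type, with field of definition a subfield of $\F_q$ (determined by $\tau$) and with $\mathrm{rank}(M) = \mathrm{rank}(Y) < \mathrm{rank}(X) = \mathrm{rank}(G)$. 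Reading $H \leq M$ as embedding $H$ into such a group yields the second alternative.

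The main obstacle is this last case: I would need to extract from the Liebeck--Seitz classification of irreducible embeddings that $Y$ is a proper subgroup of $X$ (giving the strict rank inequality) and that the twist data $\tau$ corresponds to a subfield of $\F_q$. The rest is a direct matching of Aschbacher's classes $\C$ and $\S$ against the five Liebeck--Seitz types, together with the refinements (I*) and (IV*).
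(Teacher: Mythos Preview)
There is a genuine gap. The second alternative of the proposition asserts that $H$ \emph{itself} is an almost simple group of Lie type over a subfield of $\F_q$ with $\mathrm{rank}(H) < \mathrm{rank}(G)$; this is exactly what is used later in the proof of Theorem~\ref{thm:almost_simple}, where the inductive hypothesis is applied to $H_x$. Your argument, however, only establishes these properties for a maximal overgroup $M \geq H$. The sentence ``Reading $H \leq M$ as embedding $H$ into such a group yields the second alternative'' does not deliver what is required: an arbitrary subgroup of an almost simple group need not be almost simple, so knowing $M$ is almost simple of Lie type of smaller rank tells you nothing of the sort about $H$. The paper avoids this by applying Aschbacher's dichotomy directly to $H$ rather than to a chosen maximal $M$: assuming $H$ lies in no maximal subgroup of type (I*), (II), (III) or (IV*), one first notes that (I*), (II) and (III) already cover the whole geometric class $\C$, so $H$ lies in no member of $\C$, whence Aschbacher's theorem forces $H \in \S$. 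This is the step that puts the almost-simple structure on $H$ itself.

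Even after that correction, your proposed route to the rank inequality and the subfield claim via \cite[Theorem~2]{ref:LiebeckSeitz98} is not the one the paper takes, and it is not immediate. A proper closed simple subgroup $Y$ of $X$ need not have strictly smaller rank (think of $D_n$ inside $B_n$), so ``$Y \subsetneq X$ hence $\mathrm{rank}(Y) < \mathrm{rank}(X)$'' requires further work; and extracting a subfield of $\F_q$ from the Steinberg endomorphism $\tau$ on $Y$ is not automatic either. The paper instead argues module-theoretically: once $H \in \S$ is known to be of Lie type over $\F_{p^e}$, Steinberg's twisted tensor product theorem (via \cite[Proposition~5.4.6]{ref:KleidmanLiebeck}) and \cite[Corollary~6]{ref:Seitz88} force $H$ to have an $n$-dimensional irreducible module over $\F_{p^{f/d}}$ with $d \in \{1,2,3\}$ (this is where the fact that $H$ is not contained in a twisted tensor product subgroup is used), and then the minimal-module bounds of \cite[Proposition~5.4.13]{ref:KleidmanLiebeck} give $\mathrm{rank}(H) < \mathrm{rank}(G)$ after checking that the equal-rank exceptions all land in $\C$ or in type~(II).
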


\begin{proof}
Assume that $H$ does not lie in a maximal subgroup of type (I*), (II), (III) or (IV*). Let $q=p^f$ where $p$ is prime, and let $\F_q^n$ be the natural module for $G$. Since (I*), (II) and (III) cover the entire geometric class $\C$ of maximal subgroups of $G$, we know that $H$ is not a subgroup of any maximal subgroup $M$ contained in the geometric class $\C$. Therefore, by the main theorem of \cite{ref:Aschbacher84}, we deduce that $H$ is contained in $\S$. Since $H$ is not in (IV*), $H$ must be a group of Lie type defined over $\F_{p^e}$ for some $e$. By \cite[Proposition~5.4.6]{ref:KleidmanLiebeck} (which is an application of Steinberg's twisted tensor product theorem), $f$ divides $de$, for some $d \in \{1,2,3\}$, and the embedding of $H$ in $G$ affords an irreducible $\F_{p^e}H$-module $V$ of dimension $n^{f/de}$. Since $H$ is not contained in any maximal twisted tensor product subgroup, by \cite[Corollary~6]{ref:Seitz88}, we must have $f=de$, so $H$ is defined over $\F_{p^{f/d}}$ and $\dim{V} = n$. 

It remains to prove that $\mathrm{rank}(H) < \mathrm{rank}(G)$. Referring to the bounds on the dimension of the minimal module in \cite[Proposition~5.4.13]{ref:KleidmanLiebeck}, since $H$ has an irreducible module of dimension $n$ either $\mathrm{rank}(H) < \mathrm{rank}(G)$ or $(H,V)$ is one of a small number of possibilities all of which arise in the geometric class $\C$ or case (II). For instance, if $G = \mathrm{PSp}_n(q)$ with $n > 8$, then $n$ is strictly smaller than the dimension of the minimal module of any finite simple group of Lie type of $\mathrm{rank}(G) = n/2$ except $\PSL^\pm_{n/2+1}(q)$ or $\mathrm{P}\Omega^\pm_n(q)$, and, by \cite[Proposition~5.4.11]{ref:KleidmanLiebeck}, the former groups have no irreducible modules of dimension $n$ and latter groups embed in $G$ as $\C_8$ subgroups if at all. However, $H$ is not in $\C$ or (II), so $\mathrm{rank}(H) < \mathrm{rank}(G)$.
\end{proof}

\vspace{-7pt}

\enlargethispage{14pt}

\subsection{Length and type~(IV*) subgroups} \label{ss:p_length}

Let us introduce an invariant that we use throughout the paper. Let $G$ be a finite group. A \emph{subgroup chain} of $G$ of length $k$ is a sequence $G = G_0 > G_1 > \dots > G_k = 1$, and the \emph{length} of $G$, written $\ell(G)$, is the maximal length of a subgroup chain of $G$. To see the significance for this paper, note that if $X = \{ x_1, \dots, x_k \}$ is a minimal generating set for $G$, then 
\[
G = \<x_1,\dots,x_k\> > \<x_2,\dots,x_k\> > \cdots > \<x_k\> > 1
\]
is a subgroup chain, so $m(G) \leq \ell(G)$. There are many results on length, and we highlight one result that we will use later. Cameron, Solomon and Turull proved in \cite[Theorem~1]{ref:CameronSolomonTurull89} that 
\begin{equation}
\ell(S_n) = \lfloor \tfrac{3n-1}{2} \rfloor - b_n \label{eq:length}
\end{equation}
where $b_n$ is the number of ones in the base $2$ expansion of $n$.

\begin{proposition} \label{prop:length}
There exists $C > 0$ such that if $G$ is an almost simple group of Lie type of rank $r$ and $M$ is a type (IV*) maximal subgroup of $G$, then $\ell(M) \leq C r$. Moreover, this is true with $C = 192$.
\end{proposition}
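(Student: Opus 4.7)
The plan is to apply the elementary inequality $\ell(M) \leq \log_2|M|$ in every case except the alternating case, where one must instead use \eqref{eq:length} to obtain a sharper linear bound.

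First, I would split on whether $G$ is exceptional or classical. For $G$ exceptional, $r \leq 8$ and the type~(IV*)$=$(IV) maximal subgroups are classified in \cite{ref:LiebeckSeitz98}: their socles form a short explicit list of small alternating, sporadic, and Lie type groups whose orders are bounded above by an absolute constant independent of $q$. Hence $\ell(M)$ is absolutely bounded here, and provided $C$ is large enough this already gives $\ell(M) \leq Cr$ since $r \geq 1$.

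For $G$ classical of rank $r$ with natural module of dimension $n$, one has $n \leq 2r+2$, and by definition of (IV*) the socle $H$ of $M$ is either sporadic, Lie type in cross characteristic, or alternating. In the sporadic case $|H|$ is absolutely bounded. In the cross-characteristic Lie type case, the Landazuri--Seitz--Zalesski lower bound on the minimal faithful projective degree in cross characteristic shows $|H|$ is bounded by a polynomial in $n$, yielding $\ell(M) \leq \log_2|M| = O(\log r)$. The critical case is $H = A_m$: since any nontrivial irreducible $A_m$-representation has dimension at least $m-2$ (for $m \geq 9$, with only finitely many smaller exceptions which can be absorbed into the constant), we get $m \leq n+2 \leq 2r+4$, and then $\ell(M) \leq \ell(S_m) \leq (3m-1)/2 \leq 3r+5$ by \eqref{eq:length}.

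The main obstacle is purely quantitative: the qualitative bound $\ell(M) = O(r)$ is immediate from the ingredients above, but extracting the explicit constant $C=192$ requires verifying that each of the absolute bounds (from the small exceptional cases and the sporadic contribution) is at most $192$, and that the alternating estimate $3r+5$ fits comfortably inside $192r$ for all $r \geq 1$. In practice this is a matter of consulting the Liebeck--Seitz tables for the exceptional groups together with the LSZ bounds, rather than invoking any deep new argument; the generous value $C=192$ is chosen precisely to accommodate the worst-case exceptional-group contribution (e.g.\ a type~(IV) subgroup of $G_2(q)$ or $^2F_4(q)$ with socle a sporadic group of moderately large order) without case-by-case fine-tuning.
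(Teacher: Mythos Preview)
Your overall strategy matches the paper's almost exactly: split according to whether $\soc(M)$ is sporadic, alternating, or of Lie type in cross characteristic, use $\ell(M)\leq\log_2|M|$ together with bounds on $|M|$ in the sporadic and Lie type cases, and use \eqref{eq:length} in the alternating case. The paper organises the cases by the type of $\soc(M)$ first and by whether $G$ is classical or exceptional second, but that is purely cosmetic.

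There is, however, one genuine quantitative slip. In the cross-characteristic Lie type case you assert that the Landazuri--Seitz--Zalesski bounds force $|H|$ to be bounded by a \emph{polynomial} in $n$, giving $\ell(M)=O(\log r)$. This is false: if $H$ has rank $r_0$ over $\F_{q_0}$ then LSZ gives roughly $q_0^{r_0}\leq n^2$, while $|H|$ is of order $q_0^{O(r_0^2)}$, so $\log_2|H|$ is of order $(r_0\log_2 q_0)^2=O((\log n)^2)$, i.e.\ $|H|$ is only $n^{O(\log n)}$. (Take $H=\PSL_k(2)$: the minimal cross-characteristic degree is about $2^{k-1}$, but $|H|\approx 2^{k^2}\approx n^{\log_2 n}$.) The paper carries out exactly this computation, obtaining $\ell(M)\leq 48(\log_2 n)^2\leq 192r$ from $n\leq 4r$.

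This slip does not invalidate the qualitative bound $\ell(M)=O(r)$, since $(\log r)^2\leq Cr$ anyway, but it does mean your attribution of the constant $192$ is wrong. The worst case is not an exceptional-group type~(IV) subgroup at all; in the paper the exceptional case yields $\ell(M)\leq 200\leq 100r$ (using $r\geq 2$), the sporadic case gives $\ell(M)\leq 52$, and the alternating case gives roughly $6r$. The value $C=192$ is forced precisely by the classical cross-characteristic case you underestimated.
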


\begin{proof}
First assume that $\soc(M)$ is sporadic. Then $\ell(\soc(M))$ is given in \cite[Tables~III \&~IV]{ref:CameronSolomonTurull89} (the ``probable values'' have since been verified), whence we deduce that $\ell(M) \leq 52$. 

Next assume that $\soc(M) = A_d$ for some $d \geq 5$. If $G$ is exceptional, then $d \leq 18$ by \cite[Theorem~8]{ref:LiebeckSeitz03}, so $\ell(M) \leq \ell(S_d) + 1 \leq 28$ by \eqref{eq:length}, and if $G$ is classical in dimension $n$, then \cite[Proposition~5.3.7]{ref:KleidmanLiebeck} implies that $d \leq 2n-1$, so $\ell(M) \leq 3n-1 \leq 6r+2$, again by \eqref{eq:length}.

Finally assume that $\soc(M)$ is a group of Lie type of rank $r_0$ over $\F_{q_0}$ where $q_0$ is a power of a prime $p_0$. If $G$ is classical in dimension $n$, then $p_0 \neq p$ and \cite[Theorem~5.3.9]{ref:KleidmanLiebeck} implies that $q_0^{r_0} \leq n^2$, so using the facts that $|M| \leq q_0^{12r_0^2}$ and $n \leq 4r$, we have
\[
\ell(M) \leq \log_2|M| \leq \log_2 (q_0^{12r_0^2}) \leq 12 (r_0 \log_2 q_0)^2 \leq 48 (\log_2 n)^2 \leq 48 (\log_2 4r)^2 \leq 192r.
\]
If $G$ is exceptional, then there are only finitely many possibilities for $M$. Consulting \cite[Tables~10.3 \&~10.4]{ref:LiebeckSeitz99} for the case $p_0 \neq p$ and \cite[Theorem~8]{ref:LiebeckSeitz03} for the case $p_0 = p$, we see that in all cases $|M| < 2^{200}$, so $\ell(M) \leq 200 \leq 100r$.
\end{proof}

\subsection{Enumerating maximal subgroups} \label{ss:p_count}

In this section, we prove the following result, which gives a bound on the number of maximal subgroups of various types in almost simple groups of Lie type.

\begin{proposition} \label{prop:count}
Let $G$ be an almost simple group of Lie type of rank $r$ over $\F_{p^f}$. Then there are
\begin{enumerate}
\item at most $2r+\omega(f)+10$ maximal subgroups of $G$ that contain $\soc(G)$
\item at most $100r$ conjugacy classes of maximal subgroups of $G$ that have type (I*), (III) or (V)
\item at most $(r+1)(\omega(f)+2)$ conjugacy classes of maximal subgroups of $G$ that have type (II).
\end{enumerate}
\end{proposition}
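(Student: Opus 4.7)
The plan is to handle the three bounds separately, drawing on the structural description of maximal subgroups of almost simple groups of Lie type given in Sections~\ref{ss:p_maximal} and~\ref{ss:p_aschbacher} together with the known structure of outer automorphism groups. For part~(i), maximal subgroups of $G$ containing $\soc(G)$ correspond via the correspondence theorem to maximal subgroups of the quotient $G/\soc(G)$, which embeds in $\Out(\soc(G))$. The group $\Out(\soc(G))$ has a well-known structure built from a group of diagonal automorphisms $D$, a cyclic group of field automorphisms of order essentially $f$, and a graph automorphism group of order at most $6$. Since $|D| \leq r+1$ in all cases (with a handful of small exceptions that only shift the constant), $\omega(|D|) \leq \log_2(r+1) \leq r$, and a routine count of maximal subgroups in a group of this shape gives at most $\omega(|D|) + \omega(f) + O(1)$ of them, comfortably inside $2r+\omega(f)+10$.

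For part~(ii), I would consult the available classifications. If $\soc(G)$ is classical, then type~(I*) consists of the Aschbacher geometric classes $\C_1, \C_2, \C_3, \C_4, \C_6, \C_7, \C_8$ (class $\C_5$ is excluded here because it corresponds to type~(II)) together with the twisted tensor product subgroups. The number of $G$-classes in each $\C_i$ is $O(r)$ by the explicit counts in \cite{ref:KleidmanLiebeck}, and the twisted tensor family contributes $O(r)$ classes by \cite{ref:Schaffer99}. If $\soc(G)$ is exceptional, then the maximal closed positive-dimensional subgroups of the ambient algebraic group are classified in \cite{ref:LiebeckSeitz98} and contribute only a bounded number of classes. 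Type~(III) subgroups are local and non-geometric, contributing a bounded collection in each family, and type~(V) adds at most one class.

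For part~(iii), type~(II) subgroups are normalisers of $X_\alpha \cap \soc(G)$ for Steinberg endomorphisms $\alpha$ with $\alpha^k = \sigma$ for some prime $k$. Up to $X$-conjugacy these $\alpha$ are parameterised by the prime divisors of the twisting exponent: for each prime $k$ dividing $f$ there is one untwisted subfield endomorphism, and there are at most two further twisted subfield embeddings coming from Dynkin diagram symmetries (such as $\PSU_n(q) \leq \PSL_n(q^2)$, ${}^3D_4(q) \leq D_4(q^3)$, and ${}^2E_6(q) \leq E_6(q^2)$). Each $X_\sigma$-class of subfield subgroup then splits under diagonal conjugation into at most $|D| \leq r+1$ classes in $G$, and multiplying gives the bound $(r+1)(\omega(f)+2)$.

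The main obstacle will be part~(ii): while all the ingredients are standard, aggregating the $\C_i$-class counts for every classical family together with the maximal positive-dimensional subgroup counts for every exceptional type, and then verifying that the total really fits under the linear bound $100r$ with the right absolute constants, requires careful case-by-case bookkeeping. Parts~(i) and~(iii) are much shorter structural observations by comparison.
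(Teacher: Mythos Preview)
Your overall plan mirrors the paper's proof closely, and parts~(ii) and~(iii) are essentially what the paper does (the paper singles out $\C_7$ via a separate lemma, Lemma~\ref{lem:count_c7}, as the one geometric family whose class count needs a nontrivial argument, but otherwise it is the same bookkeeping you describe).

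The genuine gap is in part~(i). You assert that a group built from a cyclic diagonal part $D$, a cyclic field part of order roughly $f$, and a bounded graph part has at most $\omega(|D|)+\omega(f)+O(1)$ maximal subgroups. This is false when the extension over $D$ is not a direct product, which is exactly what happens for $\soc(G)=\PSL^\pm_n(q)$: there $\Out(\soc(G))$ has the shape $C_m{:}(C_f\times C_2)$ or $C_m{:}C_{2f}$ with the graph automorphism inverting the diagonal factor. In such a semidirect product the number of maximal subgroups grows linearly in $m=|D|$, not in $\omega(m)$; already the dihedral group $C_p{:}C_2$ has $p+1$ maximal subgroups, not $\omega(p)+\omega(2)=2$. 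The paper deals with this via a dedicated lemma (Lemma~\ref{lem:count_goursat_2}) bounding the number of maximal subgroups of $C_m{:}C_n$ and $C_m{:}(C_n\times C_2)$ by $m+\omega(n)$ and $2m+\omega(n)+2$ respectively; since $|D|\leq r+1$ this still lands inside $2r+\omega(f)+10$, so the target bound survives, but your stated route to it does not. In the remaining cases $\Out(\soc(G))\cong H\times C_{df}$ with $H\leq S_4$ and $d\leq 3$ really is a direct product, and a Goursat-style count (Lemma~\ref{lem:count_goursat_1}) gives at most $\omega(f)+10$ maximal subgroups there.
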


We establish some lemmas before proving Proposition~\ref{prop:count}.

\begin{lemma} \label{lem:count_goursat_1}
Let $H$ be a subgroup of $S_4$ and let $n$ be a positive integer. Then $G = H \times C_n$ has at most $9+\omega(n)$ maximal subgroups.
\end{lemma}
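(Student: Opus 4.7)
The plan is to apply Goursat's lemma to $G = H \times C_n$ and then perform a short case analysis over the subgroups $H \leq S_4$. Every maximal subgroup $U$ of a direct product $A \times B$ is one of three types: (a) $U = M \times B$ with $M$ maximal in $A$; (b) $U = A \times N$ with $N$ maximal in $B$; or (c) $U$ is the kernel of a surjection $A \times B \to S$ onto a simple group $S$ that is a quotient of both factors, with neither factor contained in $U$.

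Type (b) subgroups correspond bijectively to the maximal subgroups of $C_n$, giving exactly $\omega(n)$ of them. Type (a) subgroups correspond to maximal subgroups of $H$; inspecting the subgroup lattice of $S_4$ shows there are at most $8$, with equality when $H = S_4$ (the four point stabilisers $S_3$, the three Sylow $2$-subgroups $D_8$, and $A_4$). For type (c), since $C_n$ is abelian, any common simple quotient $S$ must be cyclic of prime order $p$ with $p \mid n$. Because $C_n$ has a unique subgroup of index $p$, such diagonal maximal subgroups are parametrised (after identifying surjections that differ by an automorphism of $C_p$) by the surjective homomorphisms $H \to C_p$. Since $|H|$ divides $24$, only $p \in \{2,3\}$ can occur.

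The proof then concludes by verifying, for each of the nine isomorphism types of subgroup of $S_4$, that the number of maximal subgroups of $H$ plus the total number of surjections $H \to C_p$ over $p \in \{2,3\}$ is at most $9$. The extremal case is $H = S_4$, with $8$ maximal subgroups and the unique surjection $S_4 \to C_2$ (the sign map), yielding exactly $9$. Every smaller $H$ gives a strictly smaller sum; for instance $D_8$ contributes $3+3=6$ and $A_4$ contributes $5+2=7$. Combined with the $\omega(n)$ subgroups of type (b), this gives the claimed bound $9 + \omega(n)$. The only real work is the bookkeeping in the case analysis: each of the nine subgroups of $S_4$ has a small enough subgroup lattice that the counts are routine, so no theoretical obstacle remains once Goursat's lemma is set up.
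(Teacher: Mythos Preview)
Your proposal is correct and follows essentially the same route as the paper: both apply Goursat's lemma to split the maximal subgroups of $H \times C_n$ into those of the form $M \times C_n$, those of the form $H \times N$, and diagonals over a common simple quotient $C_p$ with $p \in \{2,3\}$, and then finish with a short case check over the subgroups of $S_4$. The only cosmetic difference is that the paper phrases the diagonal count as $|\mathrm{Hom}(H,C_p)|-1$ and verifies $|\mathcal{M}(H)|+|\mathrm{Hom}(H,C_2)|+|\mathrm{Hom}(H,C_3)|\leq 11$, which is exactly your inequality with the two trivial homomorphisms added back in.
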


\begin{proof}
Write $\M(X)$ for the set of maximal subgroups of $X$ and $\mathrm{Hom}(X,Y)$ for the set of homomorphisms from $X$ to $Y$. Goursat's lemma (see \cite[(4.3.1)]{ref:Scott64}, for example) implies that 
\begin{equation}
|\M(G)| = |\M(H)|+|\M(C_n)|+\sum_{\text{prime $p \div n$}}(|\mathrm{Hom}(H,C_p)|-1). \label{eq:goursat}
\end{equation}
Note that $|\M(C_n)| = \omega(n)$ and $|\mathrm{Hom}(H,C_p)|=1$ unless $p \in \{2,3\}$. It remains to check that $|\M(H)|+|\mathrm{Hom}(H,C_2)|+|\mathrm{Hom}(H,C_3)| \leq 11$, which is easy to do for each $H \leq S_4$.
\end{proof}

%G = Sym(4)
%sub:=Subgroups(G);
%s:=#sub;
%for i in [1..s] do
% max:=MaximalSubgroups(sub[i]`subgroup);
% m:=#max;
% sum:=0;
% for j in [1..m] do
%  M:=max[j];
%  sum:=sum+M`length;
%  if #G/M`order eq 2 then
%   sum:=sum+1;
%  end if;
%  if #G/M`order eq 3 and IsNormal(G,M`subgroup) then
%   sum:=sum+2;
%  end if; 
% end for;
% print sum;
%end for;

\begin{lemma} \label{lem:count_goursat_2}
Let $m$ and $n$ be positive integers. Then
\begin{enumerate}
\item any semidirect product $C_m{:}C_n$ has at most $m+\omega(n)$ maximal subgroups
\item any semidirect product $C_m{:}(C_n \times C_2)$, where the generator of the $C_2$ subgroup inverts every element of the $C_m$ subgroup, has at most $2m+\omega(n)+2$ maximal subgroups.
\end{enumerate}
\end{lemma}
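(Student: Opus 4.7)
The plan is to partition the maximal subgroups of $G$ in each part according to whether they contain the normal cyclic subgroup $C_m$, bound the two classes separately, and sum.

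For part (i), let $G = C_m \rtimes C_n$. Maximal subgroups $M$ that contain $C_m$ correspond to maximal subgroups of the cyclic quotient $G/C_m \cong C_n$, contributing $\omega(n)$ in all. If instead $M$ does not contain $C_m$, then $MC_m = G$ by maximality, and $K := M \cap C_m$ is normal in $G$ (it is normalised by the abelian group $C_m$ and by $M$, since $C_m \trianglelefteq G$). Maximality of $M$ implies that $C_m/K$ has no proper $G/K$-invariant subgroup, but every subgroup of the cyclic group $C_m/K$ is characteristic, hence $G/K$-invariant, so $C_m/K$ has prime order $p$. Thus $K$ is the unique index-$p$ subgroup of $C_m$, one for each prime divisor of $m$, and maximal subgroups $M$ with $M \cap C_m = K$ correspond bijectively to complements of $C_m/K$ in $G/K \cong C_p \rtimes C_n$. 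A short cocycle computation, splitting on whether the induced $C_n$-action on $C_p$ is trivial, bounds the number of such complements by $p$: in the trivial case they correspond to homomorphisms $C_n \to C_p$ (so at most $p$), and in the nontrivial case Schur--Zassenhaus gives one conjugacy class of $p$ complements. Summing yields $\sum_{p \mid m} p \leq m$ subgroups of this second type, so the total is $\omega(n) + m$.

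For part (ii), the same dichotomy is applied to $G = C_m \rtimes (C_n \times C_2)$ with the $C_2$ inverting $C_m$. The number of maximal subgroups of $G$ containing $C_m$ equals the number of maximal subgroups of $C_n \times C_2$, which by Goursat's lemma is at most $\omega(n) + 2$: namely $\omega(n)$ of the form $M \times C_2$ with $M$ maximal in $C_n$, one of the form $C_n \times \{1\}$, and at most one diagonal, which exists only when $n$ is even. Maximal subgroups not containing $C_m$ are handled as in part (i): they correspond to complements of $C_p = C_m/K$ in $G/K \cong C_p \rtimes (C_n \times C_2)$, and the cocycle count now bounds the complements by $2p$. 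The two regimes are $p$ odd, where the inverting $C_2$-action forces the relation $2f(y) = (1 - \alpha)f(z)$ and produces at most $p$ cocycles; and $p = 2$, where inversion acts trivially on $C_m/K$ (so $G/K$ is abelian) and one obtains at most $4 = 2p$ cocycles. Summing gives $\sum_{p \mid m} 2p \leq 2m$, so the total is $2m + \omega(n) + 2$.

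The main obstacle is the cocycle/complement count inside the quotient $G/K$; this is conceptually routine, but in part (ii) the small-prime case $p = 2$ genuinely doubles the bound (because inversion is trivial on a group of order $2$), and this asymmetry is exactly what forces the coefficient $2$ in front of $m$ in the part (ii) bound.
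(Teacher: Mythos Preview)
Your argument is correct and follows essentially the same route as the paper: both partition the maximal subgroups according to whether they contain $C_m$, show that in the remaining case $M\cap C_m$ has prime index $p$ in $C_m$, and then bound the subgroups with a fixed $K=M\cap C_m$ by counting complements of $C_m/K\cong C_p$ in $G/K$; the paper does this last step by an explicit commutator computation in a presentation (obtaining the relation $2ik\equiv(1-k)j\pmod p$), which is exactly your cocycle relation $2f(y)=(1-\alpha)f(z)$ in different notation. One small correction: your Schur--Zassenhaus appeal in part~(i) for the nontrivial-action case requires $\gcd(p,n)=1$, which need not hold, but this is harmless since a $1$-cocycle $C_n\to C_p$ is determined by its value on a generator, giving $|Z^1(C_n,C_p)|\leq p$ directly with no case split needed.
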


\begin{proof}
For part~(i), write $G = \< a, b \mid a^m, b^n, a^ba^{-k} \>$ and $H = \< b\> \cong C_n$, and for part~(ii) write $G = \< a, b,c \mid a^m, b^n, c^2, a^ba^{-k}, a^ca, [b,c] \>$ and $H = \< b, c\> \cong C_n \times C_2$ (in both cases, we assume that $\mathrm{gcd}(k,m)=1$ and $m \div (k^n-1)$). 

Let $M$ be a maximal subgroup of $G$, and write $A = M \cap \<a\>$. The possibilities for $M$ correspond to the maximal subgroups of $G/A$. If $A = \<a\>$, then $G/A = H$, so applying \eqref{eq:goursat} as in the proof of Lemma~\ref{lem:count_goursat_1}, we see that the number of possibilities for $M$ is at $\omega(n)$ and $\omega(n)+2$ in cases~(i) and~(ii), respectively. Now assume that $A < \<a\>$. First note that $M$ projects into $H$, for otherwise $M < \<a, M_0\>$ for a maximal subgroup $M_0$ of $H$, contradicting the maximality of $M$. We next claim that $|\<a\>:A|$ is prime. For a contradiction, suppose otherwise. Then $A < \< a^p\>$ for some prime divisor $p$ of $m$. Now $M < \< M, a^p \>$. If $\< M, a^p \> = G$, then $a \in \< M, a^p\> = \<a^p\>M$ which is impossible since $M \cap \< a \> = A \leq \<a^p\>$, so $M < \< M, a^p \> < G$, which contradicts the maximality of $M$. Therefore, $|\<a\>:A|$ is a prime divisor of $m$. To finish, we divide into the cases (i) and (ii).

For (i), if $|\<a\>:A| = p$, then $M = \< A, a^ib\>$ where $0 \leq i < p$, so there are at most $p$ possibilities for $M$. This means that if $m = p_1^{e_1} \dots p_k^{e_k}$, where $p_1, \dots, p_k$ are the distinct prime divisors of $m$, there are at most $p_1+\dots+p_k+\omega(n) \leq m+\omega(n)$ maximal subgroups of $G$.

For (ii), if $|\<a\>:A| = p$, then $M = \< A, a^ib, a^jc\>$ where $0 \leq i,j < p$. Now $[a^ib,a^jc] \in \< a\> \cap M = A$, but $[a^ib,a^jc] = a^{(1-k)j-2ik}$, so $Aa^{2ik} = Aa^{(1-k)j}$ and thus there are at most two choices for $i$ for each choice of $j$. Since there are at most $p$ choices for $j$, there are at most $2p$ choices for $M$. As in the previous case, if $m = p_1^{e_1} \dots p_k^{e_k}$, then there are at most $2p_1+\dots+2p_k+\omega(n)+2 \leq 2m+\omega(n)+2$ maximal subgroups of $G$.
\end{proof}

\begin{lemma} \label{lem:count_c7}
Let $G$ be an almost simple group with socle $\PSL^\e_n(q)$. Then $G$ has at most $n$ conjugacy classes of maximal $\C_7$ subgroups.
\end{lemma}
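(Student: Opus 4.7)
The plan is to parameterise $\C_7$ maximal subgroups by the tensor-induced decompositions they stabilise and then count class contributions per parameter.

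Recall that a $\C_7$ maximal subgroup of $G$ stabilises a decomposition $V = V_1 \otimes \cdots \otimes V_t$ of the natural module $V = \F_{q^u}^n$ (with $u=1$ for linear and $u=2$ for unitary), where $\dim V_i = m$ for all $i$, $n = m^t$, $m \geq 2$ and $t \geq 2$. So, up to conjugacy in the ambient algebraic group, such subgroups are parameterised by pairs $(m,t)$ with $n = m^t$ and $m,t \geq 2$. First I would enumerate these factorisations: since $t \geq 2$ and $m \geq 2$ force $2^t \leq n$, we have $t \leq \log_2 n$, and each $t$ in this range admits at most one valid $m$. Hence there are at most $\lfloor \log_2 n \rfloor - 1$ such factorisations.

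Next, for each factorisation $n = m^t$ I would count the $G$-classes of associated $\C_7$ maximal subgroups using \cite[Proposition~4.7.3]{ref:KleidmanLiebeck}. There is a single class in the ambient algebraic group, and the splitting on passage to $\PSL^\e_n(q)$ is controlled by the index of the centre of $\mathrm{GL}^\e_m(q) \wr S_t$ inside the scalars of $\mathrm{GL}^\e_n(q)$. Extracting the bound from \cite{ref:KleidmanLiebeck}, one obtains at most $\gcd(m-\e,q-\e) \leq m$ classes per factorisation in $\PSL^\e_n(q)$; passage from $\PSL^\e_n(q)$-classes to $G$-classes can only cause fusion, not further splitting.

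Summing over factorisations then gives
\[
\sum_{\substack{m^t = n \\ m, t \geq 2}} m \;\leq\; \sum_{t=2}^{\lfloor \log_2 n \rfloor} n^{1/t} \;\leq\; n^{1/2}(1 + \log_2 n) \;\leq\; n
\]
for all but finitely many $n$, with small cases checked directly. The main obstacle will be pinning down the per-factorisation bound carefully in the unitary case, where the gcd conditions involving $\e = -1$ and the field-automorphism action interact in a more delicate way than for $\PSL$; but since the target bound $n$ is quite loose, any polynomial-in-$m$ bound extracted from \cite{ref:KleidmanLiebeck} suffices to close the argument.
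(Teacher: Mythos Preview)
Your overall strategy---parameterise $\C_7$ maximal subgroups by tensor factorisations $n = m^t$, bound the number of $G$-classes per factorisation via \cite{ref:KleidmanLiebeck}, then sum---is exactly the paper's. The gap is in your per-factorisation bound. What governs the splitting into $\PSL^\e_n(q)$-classes is the image of the determinant map on the tensor-embedded base group, and since $\det(g_1 \otimes \cdots \otimes g_t) = \prod_i \det(g_i)^{m^{t-1}}$ with $m^{t-1} = n/m$, the bound one actually reads off from \cite[Tables~3.5.A \&~3.5.B]{ref:KleidmanLiebeck} is $n/m$, not $\gcd(m-\e, q-\e) \leq m$. For $t \geq 3$ and $q \equiv \e \pmod{n}$ the class count can genuinely reach $n/m$, so your bound of $m$ is false (and Proposition~4.7.3 of \cite{ref:KleidmanLiebeck} describes the structure of the subgroup, not the class count).

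This error propagates to the summation. With the correct contribution $n/m = n^{(t-1)/t}$ per factorisation, your estimate $\sum_t n^{(t-1)/t} \leq n^{1/2}(1+\log_2 n)$ is no longer available: the summands increase with $t$, and already for $n = 64$ the sum is $8 + 16 + 32 = 56 > 48 = 8 \cdot 6$. The paper instead writes $n = s^t$ with $t$ maximal, so that the admissible numbers of tensor factors are precisely the divisors $k$ of $t$, and observes that the exponents $t - t/k$ are distinct elements of $\{0, 1, \ldots, t-1\}$; hence
\[
\sum_{k \mid t} s^{\,t - t/k} \leq \sum_{i=0}^{t-1} s^i = \frac{s^t - 1}{s - 1} < s^t = n,
\]
uniformly in $n$, with no small cases to check by hand.
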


\begin{proof}
Let $t$ be the largest integer such that $n=s^t$ for some integer $s$. Then every maximal $\C_7$ subgroup of $G$ has type $\mathrm{GL}^\e_m(q) \wr S_k$ where $k$ divides $t$ and $n=m^k$ and by \cite[Tables~3.5.A \&~3.5.B]{ref:KleidmanLiebeck},  there are at most $n/m$ $G$-classes of subgroups of a given type. Therefore, the number of $G$-classes of $\C_7$ subgroups is at most $\sum_{k \mid t} s^{t-t/k} \leq \sum_{i=0}^{t-1} s^i = (s^t-1)/(s-1) < s^t = n$.
\end{proof}

\begin{proof}[Proof of Proposition~\ref{prop:count}]
First consider part~(i). For now assume that $\soc(G) \neq \PSL^\pm_n(q)$. Then $\Out(\soc(G)) = H \times C_{df}$ where $H \leq S_4$ and $d \leq 3$, see \cite[Table~2]{ref:BurnessGuralnickHarper21} for exceptional groups and \cite[Section~5.2]{ref:HarperLNM} for classical groups. Therefore, Lemma~\ref{lem:count_goursat_1} implies that $\Out(\soc(G))$ has at most $\omega(f)+10$ maximal subgroups. It remains to assume that $\soc(G) = \PSL^\pm_n(q)$. In this case, Lemma~\ref{lem:count_goursat_2} implies that $\Out(\PSL_n(q)) = C_{\mathrm{gcd}(q-1,n)}{:}(C_2 \times C_f)$ has at most $2n+\omega(f)+2 = 2r+\omega(f)+4$ maximal subgroups and $\Out(\PSU_n(q)) = C_{\mathrm{gcd}(q+1,n)}{:}C_{2f}$ has at most $n+\omega(2f) \leq r+\omega(f)+2$ maximal subgroups. This proves part (i).

For parts~(ii) and~(iii), an $\Inndiag(\soc(G))$-class yields at most $|\Inndiag(\soc(G)):\soc(G)|$ classes in $G$, and $|\Inndiag(\soc(G)):\soc(G)| \leq 4$ unless $\soc(G) = \PSL^\pm_n(q)$, in which case $|\Inndiag(\soc(G)):\soc(G)| \leq n = r+1$. Part~(iii) now follows by the observation that there are at most $\omega(f)+2$ classes in $\Inndiag(\soc(G))$. Part~(ii) is easily verified by consulting  \cite[Theorem~2]{ref:LiebeckSeitz90} for exceptional groups and \cite[Chapter~3]{ref:KleidmanLiebeck} and \cite{ref:Schaffer99} for classical groups (we use Lemma~\ref{lem:count_c7} in the one slightly more difficult case).
\end{proof}

\section{Proofs of the main theorems} \label{s:proof}

\subsection{Independent sets for primitive actions of almost simple groups} \label{ss:proof_height}

This section is devoted to proving Theorem~\ref{thm:height}. In the introduction, we defined height in terms of sequences of points, but clearly the ordering is irrelevant, so from now on we focus on sets of points. That is, for a group $G$ acting on a set $\Omega$, a subset $S \subseteq \Omega$ is \emph{independent} if $G_{(S')} > G_{(S)}$ for all proper subsets $S'$ of $S$, and the \emph{height}, denoted $H(G,\Omega)$, is the maximum size of an independent subset of $G$ on $\Omega$.

\begin{lemma} \label{lem:independent}
Let $G$ be a finite group acting on a set $\Omega$. Let $N$ be a normal subgroup of $G$ such that $G/N$ is cyclic. Then $H(G,\Omega) \leq H(N,\Omega) + \omega(|G/N|)$. 
\end{lemma}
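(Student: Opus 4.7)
The plan is to prove the lemma in two stages: first reduce to the case where $G/N$ is cyclic of prime-power order, and then handle that case by a direct construction. The key structural input is that subgroups of a cyclic $p$-group are totally ordered, which is exactly what converts the naive $\Omega(|G/N|)$-type bound from a chain-length argument in $G/N$ into the sharper $\omega(|G/N|)$-type bound.

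For the reduction, write $|G/N| = p_1^{e_1} \cdots p_r^{e_r}$, so $r = \omega(|G/N|)$. Since $G/N$ is cyclic it has a unique subgroup of each divisor order, and each such subgroup is normal in $G$. Taking preimages in $G$ yields a chain $G = G_0 \geq G_1 \geq \cdots \geq G_r = N$ of normal subgroups of $G$ with $G_i/G_{i+1}$ cyclic of prime-power order $p_{i+1}^{e_{i+1}}$. If the special case $H(G_i,\Omega) \leq H(G_{i+1},\Omega)+1$ is established, then iterating along this chain yields $H(G,\Omega) \leq H(N,\Omega)+r = H(N,\Omega)+\omega(|G/N|)$, as required.

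It remains to treat the prime-power case, so assume $G/N \cong C_{p^e}$ and let $S \subseteq \Omega$ be $G$-independent with $|S| = H(G,\Omega)$. Define the bad set
\[
B = \{s \in S : G_{(S \setminus \{s\})} \cap N = G_{(S)} \cap N\}.
\]
If $B = \emptyset$ then $S$ is already $N$-independent, so $H(N,\Omega) \geq |S|$ and we are done. Otherwise pick any $s^* \in B$; I claim $T := S \setminus \{s^*\}$ is $N$-independent, which gives $H(N,\Omega) \geq |S|-1$. For each $t \in T$ I must show $G_{(T \setminus \{t\})} \cap N > G_{(T)} \cap N$, where $G_{(T)} \cap N = G_{(S)} \cap N$ by the choice of $s^*$. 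When $t \notin B$ this is immediate, since $G_{(T \setminus \{t\})} \supseteq G_{(S \setminus \{t\})}$ and $t \notin B$ says $G_{(S \setminus \{t\})} \cap N > G_{(S)} \cap N$.

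The main obstacle is the case $t \in B \setminus \{s^*\}$, where neither $s^*$ nor $t$ alone witnesses growth inside $N$. Here I exploit the totally ordered subgroup lattice of $G/N \cong C_{p^e}$ to combine witnesses from $s^*$ and $t$ into an element of $N$. Pick $g_1 \in G_{(S \setminus \{s^*\})} \setminus G_{(S)}$ and $g_2 \in G_{(S \setminus \{t\})} \setminus G_{(S)}$; since $s^*, t \in B$, neither $g_1$ nor $g_2$ lies in $N$, so their images in $G/N$ are nontrivial. Now $\<g_1 N\>$ and $\<g_2 N\>$ are comparable in the chain of cyclic subgroups of $C_{p^e}$, so after possibly exchanging the roles of $(g_1,s^*)$ and $(g_2,t)$ I may assume $g_1 N = (g_2 N)^c$ for some integer $c$, whence $h := g_1 g_2^{-c}$ lies in $N$. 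Since $g_1$ fixes $S \setminus \{s^*\}$ and $g_2^{-c}$ fixes $S \setminus \{t\}$, the element $h$ fixes $S \setminus \{s^*, t\}$; and because $g_2$ fixes $s^* \in S \setminus \{t\}$, we compute $h(s^*) = g_1(s^*) \neq s^*$. Therefore $h \in (G_{(S \setminus \{s^*, t\})} \cap N) \setminus G_{(S)}$, giving the strict growth required to complete the claim.
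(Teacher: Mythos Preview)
Your proof is correct and takes a genuinely different route from the paper's. The paper works with the full cyclic quotient in one pass: it extracts an $N$-independent subset $\Delta \subseteq \Gamma$ with $N_{(\Delta)} = N_{(\Gamma)}$, and then, for each prime $p_i$ dividing $|G/N|$, finds a single point $\alpha_i \in \Gamma \setminus \Delta$ so that $G_{(\Delta \cup \{\alpha_i\})}$ already has the correct $p_i$-part in $G/N$; since $G/N$ is cyclic the intersection over $i$ forces $G_{(\Delta \cup \{\alpha_1,\ldots,\alpha_k\})} = G_{(\Gamma)}$, whence $\Gamma = \Delta \cup \{\alpha_1,\ldots,\alpha_k\}$ by independence. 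You instead filter $G/N$ by its Sylow pieces to reduce to $G/N \cong C_{p^e}$, and then argue constructively that deleting one well-chosen point from a $G$-independent set $S$ yields an $N$-independent set, the key step being the explicit witness $h = g_1 g_2^{-c} \in N$. Both arguments hinge on the linear ordering of subgroups in a cyclic $p$-group; the paper's version handles all primes simultaneously and is a little shorter, while yours isolates exactly where that ordering is used and proves the slightly sharper fact that (in the prime-power case) a specific one-point deletion of $S$ is $N$-independent, not merely that some $N$-independent set of size $|S|-1$ exists. One small remark: the ``swap'' of $(g_1,s^*)$ with $(g_2,t)$ is legitimate precisely because the target inequality $N_{(S \setminus \{s^*, t\})} > N_{(S)}$ is symmetric in $s^*$ and $t$; it would be worth saying this explicitly, since at that point $s^*$ has been fixed and you are ranging over $t$.
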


\begin{proof}
Let $\Gamma \subseteq \Omega$ be an independent set for $G$ of size $H(G,\Omega)$. Fix $\Delta \subseteq \Gamma$ such that $\Delta$ is independent for $N$ and $N_{(\Delta)} = N_{(\Gamma)}$, so, in particular,  $|\Delta| \leq H(N,\Omega)$. This is always possible by \cite[Lemma~2.4]{ref:GillLodaSpiga}, but the argument is short so we give it: if $\Gamma$ is independent for $N$, then let $\Delta = \Gamma$; otherwise, there exists a proper subset $\Gamma' \subseteq \Gamma$ such that $N_{(\Gamma')} = N_{(\Gamma)}$, and we repeat the argument replacing $\Gamma$ with $\Gamma'$.

Let $\p\:G \to G/N$ be the quotient map, and let $p_1 < \dots < p_k$ be the prime divisors of $|G/N|$, so, in particular, $k = \omega(|G/N|)$. Write $|\p(G_{(\Gamma)})| = p_1^{e_1} \cdots p_k^{e_k}$. Fix $1 \leq i \leq k$. Suppose that for all $\alpha \in \Gamma \setminus \Delta$, the $p_i$-part of $|\p(G_{(\Delta \cup \{\alpha\})})|$ strictly exceeds $p_i^{e_i}$. Then since $G_{(\Gamma)} = \bigcap_{\alpha \in \Gamma \setminus \Delta} G_{(\Delta \cup \{\alpha\})}$ and $\p(G)$ is cyclic, the $p_i$-part of $|\p(G_{(\Gamma)})|$ strictly exceeds $p_i^{e_i}$, which is a contradiction. Therefore, there exists $\alpha_i \in \Gamma \setminus \Delta$ such that the $p_i$-part of $|\p(G_{(\Delta \cup \{\alpha_i\})})|$ is $p_i^{e_i}$. Thus $|\p(G_{(\Delta \cup \{\alpha_1, \dots, \alpha_k\})})| = p_1^{e_1} \cdots p_k^{e_k} = |\p(G_{(\Gamma)})|$. However, $N_{(\Delta \cup \{\alpha_1, \dots, \alpha_k\})} = N_{(\Gamma)}$, so we have $G_{(\Delta \cup \{\alpha_1, \dots, \alpha_k\})} = G_{(\Gamma)}$. Since $\Gamma$ is independent for $G$, we deduce that $\Gamma = \Delta \cup \{\alpha_1, \dots, \alpha_k\}$, which implies that $H(G,\Omega) = |\Gamma| \leq |\Delta| + k \leq H(N,\Omega) + \omega(f)$, as sought.
\end{proof}

\begin{proof}[Proof of Theorem~\ref{thm:height}]
Let $G$ be an almost simple group of Lie type of rank $r$ over $\F_{p^f}$, where $p$ is prime, acting primitively on $\Omega$. Let $G_0 = \soc(G)$, so $G_0 \leq G \leq \Aut(G_0)$. Now \cite[Theorem~2.5.12]{ref:GorensteinLyonsSolomon98} implies that $\Aut(G_0)$ has a normal subgroup $N$ such that $\Aut(G_0)/N = C_f$ and $|N/G_0| \leq 6r$. Since $G/(G \cap N) \cong GN/N$, by Lemma~\ref{lem:independent}, 
\[
H(G,\Omega) \leq H(G \cap N,\Omega) + \omega(|GN/N|) \leq H(G \cap N,\Omega) + \omega(f),
\] 
and, by \cite[Lemma~2.8]{ref:GillLodaSpiga}, 
\[
H(G \cap N,\Omega) \leq H(G_0,\Omega) + \ell((G \cap N)/G_0) \leq H(G_0,\Omega) + \ell(N/G_0).
\] 
Now $\ell(N/G_0) \leq \log_2{|N/G_0|} \leq \log_2(6r) \leq 3r^3$, so $H(G,\Omega) \leq H(G_0,\Omega) + 3r^3 + \omega(f)$. While the action of $G_0$ on $\Omega$ need not be primitive, as explained in the final paragraph of the proof of \cite[Corollary~3]{ref:GillLiebeck23}, we still have $H(G_0,\Omega) \leq 174r^8$, so $H(G_0,\Omega) \leq 177r^8 + \omega(f)$.
\end{proof}

\subsection{Minimal generating sets for almost simple groups of Lie type} \label{ss:proof_almost_simple}

We are now in a position to prove Theorem~\ref{thm:almost_simple}. 

\begin{proof}[Proof of Theorem~\ref{thm:almost_simple}]
By Theorem~\ref{thm:height} and Proposition~\ref{prop:length}, there exist constants $A,B,C > 0$ such that for all almost simple groups of Lie type $G$ of rank $r$ over $\F_{p^f}$, where $p$ is prime, the following both hold
\begin{enumerate}
\item if $G$ acts faithfully and primitively on a set $\Omega$, then $H(G,\Omega) \leq Ar^B+\omega(f)$
\item if $M$ is a maximal subgroup of $G$ of type (IV*), then $\ell(M) \leq C r$.
\end{enumerate}
Define $\alpha = \max\{ 100A, C \}$ and $\beta=B+2$ (note that $(\alpha,\beta) = (17700,10)$ is a valid choice here since $(A,B) = (177,8)$ and $C=192$ are valid for Theorem~\ref{thm:height} and Proposition~\ref{prop:length}).

Let $G$ be an almost simple group of Lie type of rank $r$ over $\F_{p^f}$ where $p$ is prime. Then we claim that 
\begin{equation}
m(G) \leq \alpha (r + \omega(f))^\beta. \label{eq:almost_simple}
\end{equation}
Let $X$ be a minimal generating set for $G$. For each $x \in X$, write $H_x = \< X \setminus \{x\} \>$ and let $M_x$ be a maximal subgroup of $G$ such that $H_x \leq M_x$. For distinct $x,y \in X$ note that $M_x \neq M_y$, for otherwise $\< X \setminus \{x\} \> \leq M_x$ and $\< X \setminus \{y\} \> \leq M_x$, so $G = \< X \> \leq M_x$, which is impossible.

First assume that for all $x \in X$ the maximal subgroup $M_x$ contains $\soc(G)$ or has type (I*), (II), (III) or (V). For a contradiction, suppose that $|X| > \alpha(r+\omega(f))^\beta$. This means that
\begin{gather*}
|X| > \alpha(r+\omega(f))^\beta 
 \geq 100A(r+\omega(f))^{B+2} 
 \geq (Ar^B + \omega(f)) \cdot 100(r+\omega(f))^2 \\
 \geq (Ar^B + \omega(f)) \cdot (100r + (r+1)(\omega(f)+2)) + (2r + \omega(f) + 10).
\end{gather*}
By Proposition~\ref{prop:count}(i), $\soc(G) \leq M_x$ for at most $2r + \omega(f) + 10$ elements $x$ of $X$. Therefore, $M_x$ is core-free for strictly greater than $(Ar^B + \omega(f)) \cdot (100r + (r+1)(\omega(f)+2))$ elements $x$ of $X$. Now Proposition~\ref{prop:count}(ii)--(iii) together with the pigeonhole principle implies that there exists a core-free maximal subgroup $M$ of $G$ and a subset $Y \subseteq X$ such that $|Y| > Ar^B + \omega(f)$ and for all $y \in Y$ there exists $g_y \in G$ such that $M_y = M^{g_y}$. 

We claim that $\bigcap_{y \in Y} M^{g_y} < \bigcap_{y \in Y \setminus \{y_0\}} M^{g_y}$ for all $y_0 \in Y$. To see this, it suffices to fix $y_0 \in Y$ and show that $\bigcap_{y \in Y \setminus \{y_0\}} M^{g_y} \not\leq M^{g_{y_0}}$. For a contradiction, suppose otherwise. First note that $\< X \setminus \{y_0\} \> = H_{y_0} \leq M_{y_0} = M^{g_{y_0}}$, Second note that for all $y \in Y \setminus \{y_0\}$ we have $y_0 \in \< X \setminus (Y \setminus \{y_0\}) \> \leq \< X \setminus \{y\} \> = H_y \leq M_y = M^{g_y}$, so $y_0 \in \bigcap_{y \in Y \setminus \{y_0\}} M^{g_y}$. Therefore, under the supposition that $\bigcap_{y \in Y \setminus \{y_0\}} M^{g_y} \leq M^{g_{y_0}}$, we deduce that $G = \< X \> \leq M^{g_{y_0}}$, which is absurd. This establishes the claim.

This means that $\{ Mg_y \mid y \in Y \}$ is an independent set for the action of $G$ on $G/M$, so Theorem~\ref{thm:height} implies that $|Y| \leq Ar^B + \omega(f)$ (see (ii) above), but this directly contradicts the fact that $|Y| > Ar^B + \omega(f)$. Therefore, we deduce that $|X| \leq \alpha(r+\omega(f))^\beta$.

Next assume that there exists $x \in X$ such that $M_x$ has type (IV*). We clearly have the inequalities $|X| \leq m(H_x) + 1 \leq \ell(H_x) + 1 \leq \ell(M_x) + 1$. Proposition~\ref{prop:length} implies that $\ell(M_x) \leq Cr \leq \alpha r$ (see (i) above), so $|X| \leq \alpha r+1 \leq \alpha(r+\omega(f))^\beta$.

We now pause to observe that we have proved \eqref{eq:almost_simple} when $\soc(G) = \PSL_2(p^f)$ since in this case the (I) coincides with (I*) and (IV) coincides with (IV*).

Having established the result in the base case where the rank $r$ is $1$, we now complete the proof by induction. Suppose that $r = s > 1$ and that \eqref{eq:almost_simple} holds for all groups with $r < s$. 

By Proposition~\ref{prop:maximal}, it remains to assume that $G$ is classical and $H_x$ is an almost simple group of Lie type defined over $\F_{p^e} \subseteq \F_{p^f}$ such that $\mathrm{rank}(M_x) < r$. Now, by induction,
\[
|X| \leq m(H_x) + 1 \leq \alpha (\mathrm{rank}(H_x) + \omega(e))^\beta + 1 \leq \alpha(r-1+\omega(f))^\beta + 1 \leq \alpha(r+\omega(f))^\beta.
\]
Therefore, in all cases $|X| \leq \alpha(r+\omega(f))^\beta$, as desired.
\end{proof}

We next show that, up to improving $\alpha$ and $\beta$, the bound in Theorem~\ref{thm:almost_simple} is best possible.

\begin{proposition} \label{prop:lower}
Let $G$ be a finite simple group of Lie type of rank $r$ over $\F_{p^f}$, where $p$ is prime. Then $m(G) \geq 2r + \omega(f)$.
\end{proposition}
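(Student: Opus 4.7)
The plan is to construct an explicit minimal generating set $S$ for $G$ of size $2r + \omega(f)$ by combining $2r$ Chevalley-style root generators of a base subfield subgroup $G_0 \leq G$ of the same Lie type with $\omega(f)$ maximal-torus elements that distinguish the prime-index subfield subgroups of $G$. Minimality will follow from two observations: removing a root generator places the remaining elements in a maximal parabolic of $G$, and removing a torus generator places them in a maximal subfield subgroup of $G$.

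Concretely, let $\F_{q_0}$ be the minimal subfield over which $G$ has a subfield subgroup of the same Lie type (so $q_0 = p$ for untwisted types) and let $G_0 = G(\F_{q_0})$. Fix a Borel $B = TU$ of $G$ containing a compatible Borel of $G_0$, with maximal torus $T$ and simple roots $\Delta = \{\alpha_1, \ldots, \alpha_r\}$. For $j = 1, \ldots, r$, take $y_j = x_{\alpha_j}(1)$ and $y_j' = x_{-\alpha_j}(1)$: by Chevalley--Steinberg relations these $2r$ elements generate $G_0$, deleting $y_j$ places the remainder in the opposite maximal parabolic $P^-_{\Delta \setminus \{\alpha_j\}}$ of $G_0$, and deleting $y_j'$ places it in the standard maximal parabolic $P_{\Delta \setminus \{\alpha_j\}}$ of $G_0$. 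Now let $p_1, \ldots, p_k$ be the distinct prime divisors of $f$ (so $k = \omega(f)$), set $G^{(i)} = G(\F_{p^{f/p_i}})$, and let $R_i = fp_i/(p_1 \cdots p_k)$, so that $\F_{p^{R_i}} \subseteq \F_{p^{f/p_j}}$ if and only if $j \neq i$. Choose $z_i \in T(\F_{p^{R_i}}) \setminus T(\F_{p^{f/p_i}})$ such that $\alpha_1(z_i)$ generates $\F_{p^{R_i}}$ over $\F_{q_0}$, a generic condition on $T(\F_{p^{R_i}})$.

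Let $S = \{y_j, y_j' : 1 \leq j \leq r\} \cup \{z_1, \ldots, z_k\}$, a set of size $2r + \omega(f)$. I would verify generation and minimality in three steps. First, each $z_i \in T \subseteq B \cap B^-$ lies in every maximal parabolic of $G$ of the form $P^{\pm}_{\Delta \setminus \{\alpha_j\}}$, so $S \setminus \{y_j\}$ and $S \setminus \{y_j'\}$ lie in proper parabolic subgroups of $G$. Second, every $y$-generator is in $G_0 \subseteq G^{(i)}$ and every $z_j$ with $j \neq i$ is in $T(\F_{p^{R_j}}) \subseteq G^{(i)}$ (since $R_j$ divides $f/p_i$), so $S \setminus \{z_i\} \subseteq G^{(i)}$, a proper subgroup. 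Third, conjugating root elements of $G_0$ by $z_i$ produces root elements of $G$ with parameter in $\F_{q_0}(\alpha_1(z_i)) = \F_{p^{R_i}}$; using Chevalley commutator formulas these generate $G(\F_{p^{R_i}})$, and since $\mathrm{lcm}(R_1, \ldots, R_k) = f$ we obtain $\langle S\rangle = G$.

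The main obstacle will be handling the twisted types uniformly, since for $\PSU_n$, $\mathrm{P}\Omega^-_{2n}$ and $^2E_6$ subfield subgroups of the same Lie type arise only from field extensions of odd degree (and for $^3D_4$, only from degree coprime to $3$). In these cases the $G^{(i)}$ above exist only for the ``good'' primes $p_i \mid f$; for the remaining primes one must substitute a suitable maximal subgroup of a different type (for example from the $\C_5$-family corresponding to a $\PSU \subset \PSL$-style embedding), or argue directly that $z_i$ can be chosen in a suitable $\sigma$-invariant subgroup of the ambient algebraic group. For the Suzuki--Ree families $^2B_2, ^2G_2, ^2F_4$ the parameter $f$ is always odd (which sidesteps the field-part issue), but here the number of twisted simple-root subgroups is strictly smaller than the untwisted rank $r$, so the $2r$ root generators of $G_0$ must be obtained by combining twisted Steinberg generators with additional root elements of the ambient algebraic group. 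The proof will therefore likely proceed by cases, with the untwisted case as the template and each twisted family handled by an analogous but adapted argument.
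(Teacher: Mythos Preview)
Your approach is essentially the paper's: exhibit a minimal generating set consisting of the $2r$ root elements $x_{\pm\alpha_i}(1)$ together with $\omega(f)$ suitably chosen torus elements, and witness minimality via parabolic subgroups (when a root element is removed) and subfield subgroups (when a torus element is removed). Two points of comparison are worth making. First, the paper's choice of torus elements is simpler than yours: writing $f = e_1^{a_1}\cdots e_k^{a_k}$ for distinct primes $e_i$, it takes $z_i = h_{\alpha_1}(\lambda_i)$ with $\lambda_i$ a primitive element of $\F_{p^{e_i^{a_i}}}$. These prime-power subfields already satisfy the key containment property (namely $\F_{p^{e_i^{a_i}}}\subseteq\F_{p^{f/e_j}}$ exactly when $j\neq i$), so your auxiliary indices $R_i$ and the iterative $\mathrm{lcm}$ argument for generation are unnecessary; the paper just invokes the standard Steinberg presentation for both generation and the root--parabolic containment. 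Second, the paper does not carry out a case analysis for the twisted families: it works uniformly, asserting that $X\setminus\{z_i\}$ lies in ``the subfield subgroup defined over the subfield $\F_{p^{f/e_i}}$'' and leaving the type-dependent details to the structural references it cites. So your plan is correct and on the same track, and your caution about the twisted types is not unreasonable, but the paper's execution is considerably more streamlined.
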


\begin{proof}
Let $B$ be a Borel subgroup of $G$ containing a maximal torus $T$, let $\Phi$ be the corresponding root system of $G$ and let $\Delta = \{ \alpha_1, \dots, \alpha_r \}$ be a set of simple roots. We will now construct a minimal generating set for $G$. To refer to elements of $G$, we will use the standard Lie theoretic notation $x_\alpha(t)$ and $h_\alpha(t)$, see  \cite[Theorem~1.12.1]{ref:GorensteinLyonsSolomon98}, for example. 

For $1 \leq i \leq r$, let $x_i$ and $y_i$ be the root elements  $x_{\alpha_i}(1)$ and $x_{-\alpha_i}(1)$, respectively. Write $f = e_1^{a_1} \cdots e_k^{a_k}$ where $e_1, \dots, e_k$ are the distinct prime divisors of $f$ (so $k = \omega(f)$). For $1 \leq i \leq k$, let $f_i = e_i^{a_i}$, let $\l_i$ be a primitive element of the subfield $\F_{p^{f_i}}$ and let $z_i = h_{\alpha_1}(\l_i)$. 

We claim that $X = \{ x_1, \dots, x_r, y_1, \dots, y_r, z_1, \dots, z_k \}$ is a minimal generating set for $G$ (since $|X| = 2r+\omega(f)$ this establishes the result). The fact that $X$ generates $G$ follows from \cite[Theorem~1.12.1]{ref:GorensteinLyonsSolomon98} and \cite[Corollary~24.2]{ref:MalleTesterman11}. To see that $X$ is minimal, note that for all $1 \leq i \leq k$, the set $X \setminus \{z_i\}$ is contained in the subfield subgroup defined over the subfield $\F_{p^{f/e_i}}$, and for all $1 \leq i \leq r$, both of the sets $X \setminus \{x_i\}$ and $X \setminus \{y_i\}$ are contained in parabolic subgroups of type $P_i$ (corresponding to deleting node $i$ from the Dynkin diagram of $\Phi$).
\end{proof}

\begin{example} \label{ex:lower}
To elucidate the proof of Proposition~\ref{prop:lower}, let us give an explicit description of the minimal generating set when $G = \PSL_3(q)$ and $q = p^{f_1f_2}$ for distinct primes $f_1$ and $f_2$. Let $\l_1$ and $\l_2$ be primitive elements of $\F_{p^{f_1}}$ and $\F_{p^{f_2}}$ respectively. Then we obtain a minimal generating set $\{x_1,x_2,y_1,y_2,z_1,z_2\}$ where
\begin{gather*}
x_1 = \left( \begin{array}{ccc} 1 & 1 & 0 \\ 0 & 1 & 0 \\ 0 & 0 & 1 \end{array} \right), \quad
x_2 = \left( \begin{array}{ccc} 1 & 0 & 0 \\ 0 & 1 & 1 \\ 0 & 0 & 1 \end{array} \right), \quad
y_1 = \left( \begin{array}{ccc} 1 & 0 & 0 \\ 1 & 1 & 0 \\ 0 & 0 & 1 \end{array} \right), \quad
y_2 = \left( \begin{array}{ccc} 1 & 0 & 0 \\ 0 & 1 & 0 \\ 0 & 1 & 1 \end{array} \right) \\[5pt]
z_1 = \left( \begin{array}{ccc} \l_1 & 0 & 0 \\ 0 & \l_1^{-1} & 0 \\ 0 & 0 & 1 \end{array} \right), \quad
z_2 = \left( \begin{array}{ccc} \l_2 & 0 & 0 \\ 0 & \l_2^{-1} & 0 \\ 0 & 0 & 1 \end{array} \right).
\end{gather*}
\end{example}

\subsection{Minimal generating sets for an arbitrary finite group} \label{ss:proof_main}

We now use Theorem~\ref{thm:almost_simple} to prove Theorem~\ref{thm:main}. We first require the following reduction theorem, which was proved by Lucchini, Moscatiello and Spiga \cite[Theorem~1.4]{ref:LucchiniMoscatielloSpiga21}.

\begin{theorem} \label{thm:reduction}
Let $a \geq 1$ and $b \geq 2$. Let $G$ be a finite group. Assume that every composition factor $S_0$ of $G$ and every almost simple group $S$ with socle $S_0$ satisfies $m(S)-m(S/S_0) \leq a \cdot \omega(|S_0|)^b$. Then $m(G) \leq a \cdot \delta(G)^b$.
\end{theorem}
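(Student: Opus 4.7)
The strategy is induction on $|G|$ via a minimal normal subgroup $N$ of $G$. The trivial cases are $G = 1$ and, more substantively, $G$ nilpotent, where $m(G) = \delta(G)$ by Burnside's basis theorem (so the result holds with nothing to prove). For the inductive step, Sylow $p$-subgroups of $G/N$ are quotients of Sylow $p$-subgroups of $G$, hence $\delta(G/N) \leq \delta(G)$, and the inductive hypothesis supplies $m(G/N) \leq a \cdot \delta(G/N)^b \leq a \cdot \delta(G)^b$. The task then reduces to bounding $m(G) - m(G/N)$ by a function that can be absorbed into the final bound; since $b \geq 2$, the convexity inequality $(x+y)^b \geq x^b + y^b$ for $x, y \geq 0$ leaves room to add a term of the form $a \cdot k^b$, where $k$ is a local invariant attached to $N$ and controlled by $\delta(G)$.

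Given a minimal generating set $X$ of $G$, I would first extract $Y \subseteq X$ whose image under $\pi \colon G \to G/N$ is a minimal generating set of $G/N$ (so $|Y| \leq m(G/N)$), using the standard argument that any generating set contains a subset independent modulo a normal quotient (in spirit analogous to the selection of $\Delta$ from $\Gamma$ inside the proof of Theorem~\ref{thm:height}). Writing $Z = X \setminus Y$, I would bound $|Z|$ in two cases. If $N$ is elementary abelian of characteristic $p$, then $|Z|$ is at most the number of composition factors of $N$ as a $G/N$-module, and hence at most $d(G_p) \leq \delta(G)$. If $N \cong T^n$ for a nonabelian simple group $T$, then the stabiliser in $G$ of one simple factor projects onto an almost simple group $S$ with socle $T$; the hypothesis $m(S) - m(S/T) \leq a \omega(|T|)^b$ bounds the number of $z \in Z$ needed per $G$-orbit on the simple factors of $N$. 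Since $T$ is a composition factor of $G$, we have $\omega(|T|) \leq \omega(|G|)$, which is controlled by $\delta(G)$ because each prime divisor of $|G|$ contributes at least $1$ to the Sylow sum.

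The main obstacle is the nonabelian case: one must carefully quantify how many elements of $Z$ are truly ``new'' across the $n$ direct factors of $N$ without overcounting, and then ensure the resulting bound is polynomial in $\delta(G)$ rather than in $|G|$ or $n$. A Gasch\"utz/Aschbacher--Scott analysis of chief factors should let one identify each chief factor $N/K$ with a module for $G/C_G(N/K)$, and attribute each $z \in Z$ to a canonical almost simple quotient, so that the hypothesis applies once per orbit on the simple factors rather than once per factor. Combining the resulting bound on $|Z|$ with the inductive estimate on $m(G/N)$ via the convexity of $t \mapsto t^b$ then closes the induction and yields $m(G) \leq a \cdot \delta(G)^b$.
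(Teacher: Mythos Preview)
The paper does not prove this statement at all: it is quoted verbatim as \cite[Theorem~1.4]{ref:LucchiniMoscatielloSpiga21} and used as a black box in the derivation of Theorem~\ref{thm:main}. There is therefore no ``paper's own proof'' against which to compare your attempt.

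For what it is worth, your outline---induction on $|G|$ through a minimal normal subgroup $N$, splitting into the elementary abelian and nonabelian characteristically simple cases, and exploiting the convexity of $t\mapsto t^b$ to recombine the bound on $m(G/N)$ with a bound on $m(G)-m(G/N)$---is indeed the shape of the argument in \cite{ref:LucchiniMoscatielloSpiga21}. You correctly identify the crux: when $N\cong T^n$ with $T$ nonabelian simple, one must show that the ``excess'' generators in $Z=X\setminus Y$ can be charged against the hypothesis $m(S)-m(S/T)\leq a\,\omega(|T|)^b$ in a way that does not blow up with $n$. Your sketch gestures at this via a Gasch\"utz/Aschbacher--Scott style analysis but does not actually carry it out, and that step is precisely the nontrivial content of the cited result. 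If you intend to supply a self-contained proof, you will need to fill in that passage; otherwise, citing \cite[Theorem~1.4]{ref:LucchiniMoscatielloSpiga21} as the present paper does is the appropriate course.
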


We have focussed on almost simple groups of Lie type since otherwise the required result follows from existing work in the literature as the following theorem highlights (this is noted in \cite[Lemma~4.5]{ref:LucchiniMoscatielloSpiga21} without an explicit constant).

\begin{theorem} \label{thm:non_lie}
There exists a constant $\gamma > 0$ such that if $G$ is an almost simple group that is not a group of Lie type, then $m(G) \leq \gamma \cdot \omega(|\soc(G)|)^2$. Moreover, this is true with $\gamma = 52$.
\end{theorem}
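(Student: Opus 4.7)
The starting point is the inequality $m(G) \leq \ell(G)$, which holds because any minimal generating set $\{x_1,\dots,x_k\}$ yields the strictly descending chain $\<x_1,\dots,x_k\> > \<x_2,\dots,x_k\> > \cdots > \<x_k\> > 1$ of length $k$. By the conventions of Section~\ref{s:prelims}, an almost simple group $G$ that is not of Lie type has $\soc(G)$ equal to an alternating group $A_n$ (with $n \geq 5$), a sporadic simple group, or the Tits group ${}^2F_4(2)'$, and I treat these cases separately.

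First suppose $\soc(G)$ is sporadic or the Tits group. The argument in the first paragraph of the proof of Proposition~\ref{prop:length}, via the Cameron--Solomon--Turull tables \cite[Tables~III \& IV]{ref:CameronSolomonTurull89}, already gives $\ell(G) \leq 52$. A direct inspection of orders shows that every sporadic simple group and ${}^2F_4(2)'$ has order divisible by at least four distinct primes, with $\omega = 4$ realised only by $M_{11}$, $M_{12}$, $J_2$, and ${}^2F_4(2)'$. Hence $\omega(|\soc(G)|)^2 \geq 16$, and $m(G) \leq 52 \leq 52 \cdot \omega(|\soc(G)|)^2$ is immediate.

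Now suppose $\soc(G) = A_n$ with $n \geq 5$. Then $G \leq \Aut(A_n)$, where $\Aut(A_n) = S_n$ for $n \neq 6$ and $\Aut(A_6) = S_6.C_2$ of order $1440$. Combining \eqref{eq:length} with the general inequality $\ell(\Aut(A_n)) \leq \ell(S_n) + 1$ gives
\[
m(G) \leq \ell(G) \leq \ell(\Aut(A_n)) \leq \lfloor (3n-1)/2 \rfloor + 1.
\]
Since $\omega(|A_n|) = \pi(n)$ for $n \geq 3$, it remains to check $\lfloor(3n-1)/2\rfloor + 1 \leq 52\, \pi(n)^2$ for all $n \geq 5$. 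For small $n$ this is a direct tabulation (the worst case $n = 6$ requires only $9 \leq 52 \cdot 9$), and for $n \geq 11$ it follows from Chebyshev's lower bound $\pi(n) \geq n/\ln n$, which reduces the required inequality to $3n/2 + O(1) \leq 52\, n^2/\ln^2 n$, satisfied with room to spare.

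The main obstacle is purely bookkeeping. The result is a routine consequence of $m(G) \leq \ell(G)$ together with already-tabulated length data for symmetric, sporadic, and Tits groups; no new group-theoretic input is needed. The only care required is choosing $\gamma = 52$ so as to absorb both the sporadic/Tits bound $\ell(G) \leq 52$ (with $\omega \geq 4$) and the linear-in-$n$ bound on $\ell(\Aut(A_n))$ against the super-linear growth of $\pi(n)^2$.
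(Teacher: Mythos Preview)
Your argument is correct and follows essentially the same template as the paper: handle sporadic and Tits socles via $m(G)\leq\ell(G)\leq 52$, and handle alternating socles by bounding $m(G)$ linearly in $n$ and comparing with $\pi(n)^2$. The one substantive difference is that for $\soc(G)=A_n$ the paper invokes Whiston's exact values $m(A_n)=n-2$ and $m(S_n)=n-1$, which for $n>53$ yields the sharper conclusion $m(G)<\pi(n)^2$ (no factor $52$ needed in that range); you instead use the cruder bound $m(G)\leq\ell(\Aut(A_n))\leq\lfloor(3n-1)/2\rfloor+1$ from \eqref{eq:length}, which avoids citing Whiston at the cost of carrying the factor $52$ throughout. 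Two small points of bookkeeping: the inequality $\pi(n)\geq n/\ln n$ for $n\geq 11$ is not Chebyshev's bound (Chebyshev only gives a constant strictly less than $1$) but rather the Rosser--Schoenfeld estimate \cite[Corollary~1]{ref:RosserSchoenfeld62} that the paper cites; and the first paragraph of Proposition~\ref{prop:length} that you appeal to treats only sporadic socles, so the Tits group needs a one-line separate check (e.g.\ $\ell({}^2F_4(2))\leq\log_2|{}^2F_4(2)|<26$).
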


\begin{proof}
Let $\gamma = 52$. If $\soc(G)$ is sporadic, then $m(G) \leq \ell(G) \leq 52$ (see \cite[Tables~III and~IV]{ref:CameronSolomonTurull89}), and it is easy to check that the same bound holds when $\soc(G)$ is $A_6$ or ${}^2F_4(2)'$.

We can now assume that $\soc(G) = A_n$ for $n \neq 6$.  In this case, $G$ is $A_n$ or $S_n$ and Whiston proved that $m(G)$ is $n-2$ or $n-1$, respectively \cite{ref:Whiston00}. If $n \leq 53$, then $m(G) \leq n-1 \leq 52$. Otherwise, by \cite[Corollary~1]{ref:RosserSchoenfeld62} we know that $\pi(n) > n/\log{n}$, where $\pi$ is the prime-counting function and $\log$ is the natural logarithm. Noting that $\log{n} < \sqrt{n}$, these bounds give 
\[
m(G) \leq n-1 < (n/\log{n})^2 < \pi(n)^2 = \omega(|\soc(G)|)^2. \qedhere
\]
\end{proof}

The following lemma relates Theorems~\ref{thm:almost_simple} and~\ref{thm:reduction} for groups of Lie type.

\begin{lemma} \label{lem:omega}
Let $G$ be an almost simple group of Lie type of rank $r$ over $\F_{p^f}$ (where $p$ is prime). Then 
\[
\omega(|\soc(G)|) \geq \max(1,\tfrac{1}{2}(r-1))+\omega(f).
\]
\end{lemma}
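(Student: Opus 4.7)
The plan is to exhibit the required number of distinct prime divisors of $|\soc(G)|$ by repeated application of Zsigmondy's theorem. Write $q=p^f$ throughout.

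First, the defining characteristic $p$ divides $|\soc(G)|$, giving one prime. For each prime $e$ dividing $f$, I would apply Zsigmondy's theorem to furnish a primitive prime divisor $s_e$ of $p^e-1$, that is, a prime whose multiplicative order modulo $p$ equals $e$; such $s_e$ exists unless $e=2$ and $p$ is a Mersenne prime (the exception $(p,n)=(2,6)$ cannot apply since $e$ is prime). Because $e \mid f$ implies $s_e \mid p^f-1 = q-1$, and a glance at the order formulas (see e.g.\ \cite[Tables~2.1.B--2.1.F]{ref:KleidmanLiebeck} and the analogous data for exceptional types) shows that $|\soc(G)|$ is divisible by $q-1$ up to a factor of size at most $r+1$, we obtain $s_e \mid |\soc(G)|$. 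The $s_e$'s have pairwise distinct multiplicative orders, hence are pairwise distinct and different from $p$, contributing $\omega(f)$ further prime divisors in the generic case.

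For the additional $\lceil (r-3)/2 \rceil$ primes needed when $r \geq 4$ (with none required when $r \leq 3$), I would exploit the ``rank'' part of the order formula: for each degree $d \geq 2$ of the Weyl group of $G$, a factor $q^d-1$ (or the twisted analogue $q^d+1$) divides $|\soc(G)|$, and Zsigmondy supplies a primitive prime divisor of $p^{fd}-1$ of multiplicative order $fd$. These are distinct from $p$, from one another (different orders), and from every $s_e$ (since $fd \geq 2f > e$ for every prime $e \mid f$). A type-by-type check against the standard order formulas shows that at least $r$ distinct degrees $d \geq 2$ are available across all classical, twisted and exceptional Lie types, which comfortably exceeds $\lceil (r-3)/2 \rceil$.

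The hard part will be the book-keeping of Zsigmondy exceptions---namely the Mersenne case at $e=2$ and the $(p,n)=(2,6)$ case---together with the small cancellation factor (of size at most $r+1$) that could in principle eliminate a Zsigmondy prime of size $\leq r+1$. Since only a bounded number of exceptional configurations arise, they can be dispatched either by direct inspection or by substituting a nearby prime, such as a primitive prime divisor of $p^{2e}-1$ in the Mersenne case.
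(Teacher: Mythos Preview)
Your strategy is the same as the paper's: exhibit enough distinct primitive prime divisors of numbers $p^i-1$ dividing $|\soc(G)|$, together with the characteristic $p$, via Zsigmondy. The paper's execution differs in one organisational point that makes the exception-handling cleaner. Rather than taking the exponents $e$ (primes dividing $f$) and $fd$ (for degrees $d\geq 2$) as two separate families, the paper builds a \emph{single} strictly increasing list
\[
1 < e_1 < e_1e_2 < \cdots < e_1e_2\cdots e_l < fd_2 < \cdots < fd_k,
\]
using products of the distinct prime divisors of $f$ rather than the primes themselves (and prepending the index $1$). The point is that Zsigmondy then fails at most once along the whole list, and that single loss is exactly compensated by the prime $p$; no separate case analysis of Mersenne or $(2,6)$ configurations is needed. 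Your version would get there too, but the deferred bookkeeping---handling up to two Zsigmondy failures (one in the ``field'' family, one in the ``rank'' family) simultaneously---is genuine work that the chain trick sidesteps.

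One correction worth making: your worry about the centre cancelling a small Zsigmondy prime is unnecessary. For every finite simple group of Lie type one can check that each full factor $q^{d_i}-1$ (not merely a quotient of it) divides $|\soc(G)|$, because the scalar factor $\gcd(n,q-1)$ (or its analogue) divides $q-1$ and hence can always be absorbed into one of the \emph{other} cyclotomic factors in the order formula. So you may drop the ``up to a factor of size at most $r+1$'' caveat entirely, which also removes one source of exceptional cases from your outline.
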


\begin{proof}
It is easy to check that $|\soc(G)|$ is divisible by $p^{fd_1}-1$, $p^{fd_2}-1$, \dots $p^{fd_k} -1$ for some $d_1 < d_2 < \dots < d_k$ with $k \geq \max(1,\frac{1}{2}(r-1))$. Moreover, if $e_1, \dots, e_l$ are the distinct prime divisors of $f$, then $|\soc(G)|$ is divisible by $p-1$, $(p^{e_1}-1)/(p-1)$, $(p^{e_1 e_2}-1)/(p-1)$ \dots, $(p^{e_1 e_2 \dots e_l}-1)/(p-1)$. Note that $1 < e_1 < e_1e_2 < \dots < e_1e_2 \dots e_l < fd_2 < fd_3 \dots < fd_k$. By Zsigmondy's theorem \cite{ref:Zsigmondy82}, for all but at most one $i \in \{1, e_1, \dots, e_1e_2 \dots e_l, fd_2, fd_3, \dots, fd_k \}$ we may fix a primitive prime divisor of $p^i-1$. Noting that $|\soc(G)|$ is also divisible by $p$, we deduce that $\omega(|\soc(G)|) \geq \max(1,\frac{1}{2}(r-1)) + \omega(f)$. 
\end{proof}

We can now prove Theorem~\ref{thm:main}.

\begin{proof}[Proof of Theorem~\ref{thm:main}]
Let $\alpha, \beta, \gamma > 0$ be constants satisfying Theorems~\ref{thm:almost_simple} and~\ref{thm:non_lie}. Let us define $a = \max\{\alpha \cdot 3^\beta, \gamma\}$ and $b = \max\{\beta,2\}$ (note that $b=10$ is a valid choice here since $\beta=10$ is a valid choice in Theorem~\ref{thm:almost_simple}, and using $\alpha=10^5$ and $\gamma=52$ gives $a < 10^{10}$).

Let $G$ be a finite group, let $S_0$ be a composition factor of $G$ and let $S$ be an almost simple group with socle $S_0$. First assume that $S_0$ is alternating, sporadic or the Tits group. Then Theorem~\ref{thm:non_lie} implies that $m(S) \leq \gamma \cdot \omega(|S_0|)^2$. Now assume that $S_0$ is finite simple group of Lie type of rank $r$ over $\F_{p^f}$, where $p$ is prime. Then Theorem~\ref{thm:almost_simple} gives us the bound $m(S) \leq \alpha(r+\omega(f))^\beta$. By Lemma~\ref{lem:omega}, $\omega(|S_0|) \geq \max(1,\frac{1}{2}(r-1))+\omega(f) \geq \frac{1}{3}(r+\omega(f))$, so $m(S) \leq \alpha \cdot 3^\beta \cdot \omega(|S_0|)^\beta$. Therefore, in both cases, $m(S) \leq a \cdot \omega(|S_0|)^b$. By Theorem~\ref{thm:reduction}, this establishes that $m(G) \leq a \cdot \delta(G)^b$. 
\end{proof}

\vspace{2pt}

\noindent Scott Harper \newline
School of Mathematics and Statistics, University of St Andrews, KY16 9SS, UK \newline
\texttt{scott.harper@st-andrews.ac.uk}

\end{document}